\newtheorem{theorem}{Theorem}[section]
\newtheorem{lemma}[theorem]{Lemma}
\newtheorem{corollary}[theorem]{Corollary}
\theoremstyle{definition}
\newtheorem{example}[theorem]{Example}
\newtheorem{question}[theorem]{Question}
\newtheorem{conjecture}[theorem]{Conjecture}
\newtheorem{remark}[theorem]{Remark}
\newcommand{\ZZ}{ \ensuremath{\mathbb{Z}}}
\newcommand{\RR}{ \ensuremath{\mathbb{R}}}
\newcommand{\QQ}{ \ensuremath{\mathbb{Q}}}
\newcommand{\skel}{\ensuremath{\mathrm{skel}}}
\newcommand{\gin}{\ensuremath{\mathrm{gin}}}
\newcommand{\init}{\ensuremath{\mathrm{in}}\hspace{1pt}}
\newcommand{\rev}{{\mathrm{{rev}}}}
\newcommand{\Tor}{\ensuremath{\mathrm{Tor}}\hspace{1pt}}
\newcommand{\Hom}{\ensuremath{\mathrm{Hom}}\hspace{1pt}}
\newcommand{\Ext}{\ensuremath{\mathrm{Ext}}\hspace{1pt}}
\newcommand{\lk}{{\mathrm{lk}}}
\newcommand{\mideal}{\ensuremath{\mathfrak{m}}}
\def\cocoa{{\hbox{\rm C\kern-.13em o\kern-.07em C\kern-.13em o\kern-.15em A}}}
\newcommand{\ee}{\mathbf{e}}
\newcommand{\aaa}{\mathbf{a}}
\newcommand{\kk}{\mathbf{k}}
\begin{document}
%\title{On the simplicial polytopes and spheres with fewest k-faces}
\title[On the generalized lower bound conjecture]{On the generalized lower bound conjecture\\ for polytopes and spheres}

\author{Satoshi Murai}
\address{
Satoshi Murai,
Department of Mathematical Science,
Faculty of Science,
Yamaguchi University,
1677-1 Yoshida, Yamaguchi 753-8512, Japan.
}
\email{murai@yamaguchi-u.ac.jp}

\author{Eran Nevo}
\address{
Eran Nevo,
Department of Mathematics,
Ben Gurion University of the Negev,
Be'er Sheva 84105, Israel
}
\email{nevoe@math.bgu.ac.il}

\thanks{
Research of the first author was partially supported by KAKENHI 22740018.
Research of the second author was partially supported by Marie Curie grant IRG-270923 and by ISF grant.
}

\maketitle
\begin{abstract}
In 1971, McMullen and Walkup posed the following conjecture, which is called the
generalized lower bound conjecture:
If $P$ is a simplicial $d$-polytope then its $h$-vector $(h_0,h_1,\dots,h_d)$
satisfies $h_0 \leq h_1 \leq \dots \leq h_{\lfloor \frac d 2 \rfloor}$.
Moreover, if $h_{r-1}=h_r$ for some $r \leq \frac d 2$ then $P$ can be triangulated without introducing simplices of dimension $\leq d-r$.

The first part of the conjecture was solved by Stanley in 1980 using the hard Lefschetz theorem for projective toric varieties.
In this paper, we give a proof of the remaining part of the conjecture.
In addition, we generalize this property to a certain class of simplicial spheres, namely those admitting the weak Lefschetz property.
\end{abstract}

\section{Introduction}\label{sec:Introduction}
%Face enumeration of polytopes has a long history.
The study of face numbers of polytopes is a classical problem.
For a simplicial $d$-polytope $P$ let $f_i(P)$ denote the number of its $i$-dimensional faces, where $-1\leq i\leq d-1$ ($f_{-1}(P)=1$ for the emptyset).
The numbers $f_i(P)$ are conveniently described by the \emph{$h$-numbers}, defined by $h_i(P)=\sum_{j=0}^i (-1)^{j-i} {d-j \choose i-j} f_{j-1}$ for $0\leq i\leq d$. The Dehn-Sommerville relations assert that $h_i(P)=h_{d-i}(P)$ for all $0\leq i\leq \lfloor \frac{d}{2}\rfloor$, generalizing the Euler-Poincar\'{e} formula.

In 1971, McMullen and Walkup \cite{McMullenWalkup:GLBC-71} posed the following \emph{generalized lower bound conjecture} (GLBC), generalizing Barnette's \emph{lower bound theorem} (LBT) \cite{Barnette-LBTfacets-71,Barnette:LBT-73}.

\begin{conjecture}\label{conj:GLBTpolytopes}(McMullen--Walkup)
Let $P$ be a simplicial $d$-polytope. Then
\begin{itemize}
\item[(a)] $1=h_0(P)\leq h_1(P)\leq \dots \leq h_{\lfloor \frac{d}{2} \rfloor}(P)$.
\item[(b)] for an integer $1\leq r\leq \frac{d}{2}$, the following are equivalent:
\begin{itemize}
\item[(i)] $h_{r-1}(P)=h_{r}(P)$.
\item[(ii)] $P$ is $(r-1)$-stacked, namely, there is a triangulation $K$ of $P$ all of whose faces of dimension at most $d-r$ are faces of $P$.
\end{itemize}
\end{itemize}
\end{conjecture}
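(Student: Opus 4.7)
The plan is to prove the non-trivial implication $(i)\Rightarrow(ii)$ of part (b); the reverse implication $(ii)\Rightarrow(i)$ was already established by McMullen and Walkup via a direct face count on $(r-1)$-stacked triangulations. The main new input is to extract an additional combinatorial consequence from Stanley's hard Lefschetz theorem by analyzing the boundary case $h_{r-1}=h_r$, and then to convert that consequence into the existence of the required triangulation via the theory of generic initial ideals.

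Concretely, I would fix a generic linear system of parameters $\Theta=(\theta_1,\dots,\theta_d)$ and one further generic linear form $\omega$ on the Stanley--Reisner ring $k[\partial P]$, and set $A=k[\partial P]/(\Theta)$. By hard Lefschetz, multiplication $\omega\cdot:A_{r-1}\to A_r$ is injective for $r\leq d/2$, and the hypothesis $h_{r-1}(P)=h_{r}(P)$ upgrades this map to an isomorphism. After a generic change of coordinates sending $\Theta$ and $\omega$ to the last $d+1$ variables, I would encode this isomorphism as a property of the reverse-lex generic initial ideal: in degree $r$ the ideal $\gin_\rev(I_{\partial P})$ becomes ``saturated'' in these last variables, which gives a clean description of all its minimal generators of degree $\leq r$. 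Passing through the squarefree operation yields a corresponding structural constraint on the symmetric algebraic shift $\dels{\partial P}$.

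From this structural constraint I would read off the candidate triangulation $K$ of $P$ by adjoining to $\partial P$ precisely those interior faces whose complementary squarefree monomials account for the new generators of $\gin_\rev(I_{\partial P})$ in degrees exceeding $r$. The hard-Lefschetz-driven saturation then forces every such new face to have dimension strictly greater than $d-r$, which is exactly the $(r-1)$-stacked condition required.

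The main obstacle will be to show that this abstractly constructed simplicial complex $K$ is in fact a \emph{geometric} triangulation of the polytope $P$, not merely a simplicial ball with the correct $f$-vector. Overcoming this likely requires a polytopal realization step: either exhibiting $K$ as a regular pulling triangulation of $P$ built from subdivision data consistent with the gin structure, or performing an inductive attachment of simplices to $\partial P$ along shared low-dimensional faces, using hard Lefschetz to guarantee that the process stays inside $P$. Reconciling the algebraic data coming from generic initial ideals with an honest polytopal triangulation is where the bulk of the technical difficulty will lie, and it is precisely this reconciliation that turns Stanley's theorem into the conclusion of the generalized lower bound conjecture.
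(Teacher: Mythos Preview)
Your overall architecture---extract an algebraic consequence of $h_{r-1}=h_r$ via hard Lefschetz and generic initial ideals, then upgrade it to a geometric triangulation---matches the paper's strategy. However, two concrete steps in your plan are either underspecified or off-track compared with what actually makes the argument go through.

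First, your candidate $K$ is not the right object. You propose to build $K$ by adjoining to $\partial P$ interior faces read off from ``complementary squarefree monomials'' of high-degree generators of $\gin_{\rev}(I_{\partial P})$, passing through symmetric algebraic shifting. The paper instead takes the very explicit candidate $\Delta(d-r)=\{F:\skel_{d-r}(2^F)\subset\partial P\}$, equivalently the complex with Stanley--Reisner ideal $(I_{\partial P})_{\leq r}$. The key algebraic fact is not a description of the high-degree gin generators but rather that, under $h_{r-1}=h_r$ and WLP, $\gin(I_{\partial P})$ has \emph{no} minimal generators in degree $r+1$; by Green's Crystallization Principle this forces $\gin((I_{\partial P})_{\leq r})=\gin(I_{\partial P})_{\leq r}$, and one concludes that $\kk[\Delta(d-r)]$ is Cohen--Macaulay of dimension $d+1$. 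Your shifting detour does not obviously land on $\Delta(d-r)$, and without Cohen--Macaulayness of the candidate the rest of the argument collapses.

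Second, your proposed realization step---regular pulling triangulations or inductive simplex attachment ``using hard Lefschetz to stay inside $P$''---is speculative, and the paper does something quite different and much cleaner. Once $\Delta':=\Delta(d-r)$ is known to be pure of dimension $d$ and Cohen--Macaulay, one shows directly that the convex hulls $\{[F]:F\in\Delta'\}$ form a geometric simplicial complex inside $P$ (a minimality/Radon argument, using that interior faces have size $\geq d-r+2$), and then that $[\Delta']=P$ by Alexander duality in $S^d$: were $[\Delta']\subsetneq P$, the complement $S^d\setminus[\Delta']$ would be disconnected, forcing $\widetilde H_{d-1}(\Delta';\QQ)\neq 0$, which contradicts Reisner's criterion for the Cohen--Macaulay complex $\Delta'$. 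No pulling, no inductive attachment, and hard Lefschetz is used only to produce Cohen--Macaulayness, not to control a geometric process.
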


Around 1980 the \emph{$g$-theorem} was proved, giving a complete characterization of the face numbers of simplicial polytopes. It was conjectured by
McMullen \cite{McMullen-g-conj},
sufficiency of the conditions was proved by Billera--Lee \cite{Billera-Lee} and  necessity by Stanley \cite{Stanley:NumberFacesSimplicialPolytope-80}.
Stanley's result establishes part (a) of the GLBC.

As for part (b), the implication $(\mathrm{ii})\Rightarrow (\mathrm{i})$ was shown in \cite{McMullenWalkup:GLBC-71}. The implication $(\mathrm{i})\Rightarrow (\mathrm{ii})$ is
easy for $r=1$, and was proved for $r=2$ as part of the LBT \cite{Barnette-LBTfacets-71}.
The main goal of this paper is to prove the remaining open part of the GLBC.
In particular, it follows that $(r-1)$-stackedness of
a simplicial $d$-polytope where $r \leq \frac d 2 $ only depends on its face numbers.

McMullen \cite{McMullen:Triangulations} proved that,
to study Conjecture \ref{conj:GLBTpolytopes}(b),
it is enough to consider combinatorial triangulations.
Thus we write a statement in terms of simplicial complexes.
For a simplicial complex $\Delta$ on the vertex set $V$ and a positive integer $i$, let
$$\Delta(i):=\{F\subseteq V: \skel_{i}(2^F)\subseteq \Delta\},$$
where $\skel_{i}(2^F)$ is the $i$-skeleton of the simplex defined by $F$, namely the collection of all subsets of $F$ of size at most $i+1$.

For a simplicial $d$-polytope $P$
with boundary complex $\Delta$,
we say that a simplicial complex $K$ is a \emph{triangulation} of $P$ if its geometric realization is homeomorphic to a $d$-ball and its boundary is $\Delta$.
A triangulation $K$ of $P$ is \emph{geometric} if in addition
there is a geometric realization of $K$ whose underlying space is $P$.

\begin{theorem}\label{thmIntro:GLBCpolytopes}
Let $P$ be a simplicial $d$-polytope with the $h$-vector $(h_0,h_1,\dots,h_d)$,
$\Delta$ its boundary complex, and $1\leq r\leq \frac{d}{2}$ an integer.
If $h_{r-1}=h_{r}$ then $\Delta(d-r)$ is the unique geometric triangulation of $P$
all of whose faces of dimension at most $d-r$ are faces of $P$.
\end{theorem}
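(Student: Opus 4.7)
Uniqueness is essentially formal: if $K$ is any simplicial complex all of whose faces of dimension at most $d-r$ lie in $\Delta$, then for each face $F\in K$ every subset of $F$ of size at most $d-r+1$ is a face of $K$ of dimension at most $d-r$, hence a face of $\Delta$; equivalently $\skel_{d-r}(2^F)\subseteq\Delta$ and so $F\in\Delta(d-r)$. Thus $K\subseteq\Delta(d-r)$. Granted that $\Delta(d-r)$ itself is a simplicial $d$-ball with boundary $\Delta$, no proper $d$-dimensional subcomplex of it can again be a $d$-ball with boundary $\Delta$ (by the fundamental-class argument for pseudomanifolds with boundary), so $K=\Delta(d-r)$. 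By McMullen's reduction quoted in the introduction, it therefore suffices to exhibit $\Delta(d-r)$ as a combinatorial $d$-ball with $\partial\Delta(d-r)=\Delta$.

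For this existence statement, the plan is to exploit hard Lefschetz. Over a field $k$ of characteristic zero, Stanley's proof of part (a) provides a linear system of parameters $\Theta$ for $k[\Delta]$ and a linear form $\omega$ such that the Artinian Gorenstein reduction $A=k[\Delta]/(\Theta)$ has Hilbert function $(h_0,\dots,h_d)$ and $\omega^{d-2i}\colon A_i\to A_{d-i}$ is an isomorphism for each $i\le d/2$. The hypothesis $h_{r-1}=h_r$ forces the factor $\omega\cdot\colon A_{r-1}\to A_r$ of $\omega^{d-2(r-1)}$ to be bijective, and Gorenstein duality then yields the dual bijection $\omega\cdot\colon A_{d-r}\to A_{d-r+1}$.

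The next step is to translate this algebraic input into combinatorial information about $\Delta(d-r)$. My plan is to analyze the generic initial ideal $\gin_{\rev}(I_\Delta)$: the bijectivity above should force all of its minimal generators of degree at most $d-r+1$ to avoid the last $d$ variables, yielding a tight comparison with the Stanley-Reisner ideal of $\Delta(d-r)$, whose minimal non-faces are by definition the $(d-r+1)$-subsets of the vertex set not contained in $\Delta$. From this comparison one reads off that $k[\Delta(d-r)]$ is Cohen-Macaulay of Krull dimension $d$ with the $h$-vector expected of a simplicial $d$-ball whose boundary sphere has $h$-vector $(h_0,\dots,h_d)$.

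The principal obstacle is the final topological step: upgrading this Cohen-Macaulay and Hilbert-function data to the assertion that the underlying space of $\Delta(d-r)$ is a $d$-ball whose boundary is $\Delta$, not merely an acyclic complex of the right dimension. My plan is an induction on $d$ via vertex links: for each vertex $v$, the link of $v$ in $\Delta(d-r)$ is a candidate triangulation of the vertex figure of $P$ at $v$, a simplicial $(d-1)$-polytope, and this link inherits both the Lefschetz property and a suitable equality of consecutive $h$-numbers via the standard formula for the $h$-vector of a link. Inductively it is therefore a $(d-1)$-ball of the analogous form, and gluing these local descriptions over the vertex set of $\Delta$, together with a shelling of the extra $d$-faces of $\Delta(d-r)\setminus\Delta$ suggested by the Lefschetz isomorphism in degree $r$, yields the required combinatorial $d$-ball structure on $\Delta(d-r)$. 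Combined with McMullen's reduction, this delivers the geometric triangulation asserted in the theorem.
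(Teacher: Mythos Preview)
Your uniqueness argument and the first half of the existence step (using hard Lefschetz and generic initial ideals to establish Cohen--Macaulayness of $\kk[\Delta(d-r)]$) are close to what the paper does, though you omit an ingredient the paper relies on: the Kalai--Nagel result (Lemma~\ref{lem:NoMissingFaces}) that $h_{r-1}=h_r$ forces $\Delta$ to have no missing faces of size between $r+1$ and $d-r+1$, whence $\Delta(d-r)=\Delta(r-1)$ and its Stanley--Reisner ideal is generated in degrees $\le r$. Without this identification your gin analysis ``in degrees at most $d-r+1$'' does not go through as you describe, and the Krull dimension you want is $d+1$, not $d$.

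The genuine gap is your final topological step. The induction on $d$ via vertex links is not justified: from $h_{r-1}(\Delta)=h_r(\Delta)$ there is no ``standard formula for the $h$-vector of a link'' that delivers an analogous equality of consecutive $h$-numbers for $\lk_\Delta(v)$, so the inductive hypothesis is simply not available for the vertex figure. Even granting that every vertex link of $\Delta(d-r)$ were a ball, that would only make $\Delta(d-r)$ a homology manifold with boundary, not a ball; the ``shelling of the extra $d$-faces \ldots\ suggested by the Lefschetz isomorphism'' is a phrase, not an argument. The paper avoids this entirely with a direct geometric construction: Lemma~\ref{lem:D'embedded} shows that the convex hulls $[F]$ for $F\in\Delta(d-r)$ form a geometric simplicial complex inside $P$ (a minimal-counterexample and Radon-type argument, crucially using $\Delta(d-r)=\Delta(r-1)$ to bound the sizes of any two faces whose convex hulls meet improperly), and Lemma~\ref{lem:D'equalsP} then shows that their union is all of $P$ via Alexander duality in $S^d$, the Cohen--Macaulayness giving $\widetilde H_{d-1}(\Delta(d-r);\QQ)=0$.
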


Note that the uniqueness of such a triangulation was proved by McMullen \cite{McMullen:Triangulations}.
Moreover, it was shown by Bagchi and Datta \cite{Bagchi-Datta:StellatedSpheres} that if Conjecture \ref{conj:GLBTpolytopes}(b) is true then the triangulation must be $\Delta(d-r)$.

Since the above theorem is described in terms of simplicial complexes,
it would be natural to ask if a similar statement holds for triangulations of spheres, or more generally homology spheres.
Indeed, we also prove an analogous result for homology spheres satisfying a certain algebraic property
called the \emph{weak Lefschetz property} (WLP, to be defined later).

\begin{theorem}\label{thmIntro:GLBCspheres}
Let $\Delta$ be a homology $(d-1)$-sphere having the WLP over a field of characteristic $0$, $(h_0,h_1,\dots,h_d)$ the $h$-vector of $\Delta$, and $1\leq r\leq \frac{d}{2}$ an integer.
If $h_{r-1}=h_{r}$ then $\Delta(d-r)$ is the unique homology $d$-ball with no interior faces of dimension at most $d-r$
and with boundary $\Delta$.
\end{theorem}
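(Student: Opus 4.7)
The plan is to exploit the WLP hypothesis through the Stanley-Reisner ring $k[\Delta]=k[x_1,\ldots,x_n]/I_\Delta$. Choose a generic linear system of parameters $\Theta=\theta_1,\ldots,\theta_d$ and a generic linear form $\ell$, and set $A=k[\Delta]/(\Theta)$ and $B=A/\ell A$. Since $\Delta$ is a homology $(d-1)$-sphere, $A$ is Gorenstein with socle degree $d$ and Hilbert function $(h_0,\ldots,h_d)$, and by the WLP the multiplication $\times\ell\colon A_{r-1}\to A_r$ is surjective. Combined with $h_{r-1}=h_r$ (and $r\leq d/2$, so injectivity is also automatic on the smaller space) this forces the map to be bijective, hence $B_r=0$, equivalently every form of degree $r$ in $k[x_1,\ldots,x_n]$ lies in $(I_\Delta,\Theta,\ell)$. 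This vanishing is the algebraic core of the argument.

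The next step is to translate this vanishing into gin-theoretic information about $\Delta$. Using the reverse-lex generic initial ideal $\gin(I_\Delta)$ together with the vanishing $B_r=0$, one expects $\gin(I_\Delta)$ to be generated in degrees $\leq r$ with a structure controlled enough to compute the $h$-vector and the minimal non-faces of $\Delta(d-r)$. The aim is to show that (a) the $(d-r)$-skeleton of $\Delta(d-r)$ coincides with that of $\Delta$, which is immediate from the definition; (b) the $h$-vector of $\Delta(d-r)$ matches the vector forced on any homology $d$-ball with boundary $\Delta$ whose interior faces all have dimension $\geq d-r+1$; and (c) the Stanley-Reisner ring $k[\Delta(d-r)]$ is Cohen-Macaulay.

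I would then upgrade (c) to the statement that $\Delta(d-r)$ is a homology $d$-ball whose boundary is exactly $\Delta$. The boundary identification should follow from reading off the $(d-1)$-faces of $\Delta(d-r)$ that lie in only one facet using the gin description. The topological identification then proceeds by induction on $d$: for a nonempty face $F$, the link $\lk_{\Delta(d-r)}(F)$ should coincide with $\lk_\Delta(F)(d-r-|F|)$ or with a homology sphere, according as $F$ is a boundary or interior face, so that an inductive hypothesis identifies it as a lower-dimensional homology ball respectively sphere, which fits the pattern of a homology $d$-ball.

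Uniqueness is then quick. If $K$ is any homology $d$-ball with $\partial K=\Delta$ and no interior faces of dimension $\leq d-r$, then the $(d-r)$-skeleton of $K$ coincides with that of $\Delta$, so every face of $K$ has all its $(d-r)$-subsets in $\Delta$, giving $K\subseteq\Delta(d-r)$; matching $h$-vectors then forces equality. The principal obstacle will be the identification of $\Delta(d-r)$ as a homology ball, since unlike in the polytope case no ambient geometric realization is available (cf.\ McMullen's reduction to combinatorial triangulations, which is a polytope-specific input), and one must rely purely on the algebraic structure given by the WLP and a careful inductive analysis of links.
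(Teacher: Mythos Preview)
Your proposal correctly identifies the opening algebraic move---using the WLP to get $B_r=0$ and then controlling $\gin(I_\Delta)$ to deduce Cohen--Macaulayness of $\kk[\Delta(d-r)]$---and this is exactly what the paper does in Section~3. Your uniqueness sketch via $K\subseteq\Delta(d-r)$ plus an $f$-vector count is also workable once existence is in hand, though the paper instead gives a self-contained Alexander-duality argument (Theorem~2.3) that does not presuppose existence.

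The genuine gap is in your Step~3, the upgrade from ``Cohen--Macaulay of dimension $d$'' to ``homology $d$-ball with boundary $\Delta$''. Your plan is to induct on $d$ by analyzing $\lk_{\Delta(d-r)}(F)$. This fails for two reasons. First, the proposed identity $\lk_{\Delta(d-r)}(F)=\big(\lk_\Delta(F)\big)(d-r-\#F)$ is false in general: the right-hand side only tests $(d-r+1)$-subsets of $F\cup G$ that contain all of $F$, whereas membership of $F\cup G$ in $\Delta(d-r)$ tests \emph{all} $(d-r+1)$-subsets. Second, and more seriously, even if you repaired the link formula, the inductive hypothesis requires $\lk_\Delta(F)$ to have the WLP, and WLP is not known to pass to links of faces; this is essentially as hard as the algebraic $g$-conjecture itself, so the induction cannot get started.

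The paper avoids induction entirely. Setting $\Delta'=\Delta(r-1)$ and $C=I_\Delta/I_{\Delta'}$, it shows (Theorem~5.2) that $C$ is Cohen--Macaulay of dimension $d+1$, that $\mathrm{ann}_S(C)=I_{\Delta'}$, and that $\Tor^S_{n-d-1}(\kk,C)\cong\kk(-\ee_{[n]})$; these three facts force $C\cong\omega_{\kk[\Delta']}$, the canonical module. Hochster's formula then gives, for every $F\in\Delta'$,
\[
\dim_\kk \widetilde H_{d-\#F}\big(\lk_{\Delta'}(F);\kk\big)=\dim_\kk(\omega_{\kk[\Delta']})_{\ee_F}=\dim_\kk(I_\Delta/I_{\Delta'})_{\ee_F},
\]
which is $1$ if $F\notin\Delta$ and $0$ if $F\in\Delta$. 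Combined with Cohen--Macaulayness this is precisely the definition of a homology $d$-ball with boundary $\Delta$. The point is that the canonical module computes the top homology of \emph{all} links simultaneously, so no WLP hypothesis on links is ever invoked.
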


Note that an algebraic formulation of the $g$-conjecture (for homology spheres) asserts that any homology sphere has the WLP, see e.g. \cite[Conjecture 4.22]{Swartz-SpheresToManifolds} for a stronger variation.
If this conjecture holds, then Theorem \ref{thmIntro:GLBCspheres} will extend to all homology spheres.
Indeed, the case $r=2$ in Theorem \ref{thmIntro:GLBCspheres} was proved by Kalai \cite{Kalai-LBT}, without the WLP assumption, as part of his generalization of the LBT to homology manifolds and beyond.
Further, note that for $r\leq d/2$, if a homology $(d-1)$-sphere $\Delta$ satisfies that $\Delta(d-r)$ is a homology $d$-ball with boundary $\Delta$, then $\Delta$ satisfies all the numerical conditions in the $g$-conjecture (including the nonlinear Macaulay inequalities), as was shown by Stanley \cite{Stanley:CMcomplexes77}.

This paper is organized as follows:
In Section \ref{sec:Triangulations} we give preliminaries on triangulations and prove the uniqueness claim in the above two theorems.
In Section \ref{sec:CM} we prove that $\Delta(d-r)$ satisfies a nice algebraic property called the Cohen--Macaulay property.
In Section \ref{sec:polytopes}, by using this result together with a geometric and topological argument, we show that $\Delta(d-r)$ triangulates $P$ in Theorem \ref{thmIntro:GLBCpolytopes}.
In Section \ref{sec:LefschetzSpheres} we prove Theorem \ref{thmIntro:GLBCspheres} based on the theory of canonical modules in commutative algebra.
Lastly, in Section \ref{sec:conclude} we give some concluding remarks and open questions.

%%%%%%%%%%%%%%%%%%%%%%%%%%%%%%%%%%%%%%%%%%%%%55
\section{Triangulations}\label{sec:Triangulations}
In this section, we provide some preliminaries and notation on triangulations, and prove the uniqueness statements in Theorems \ref{thmIntro:GLBCpolytopes} and \ref{thmIntro:GLBCspheres}.

Let $\Delta$ be an (abstract) \emph{simplicial complex} on vertex set $V$, namely, a collection of subsets of $V$ such that, for any $F \in \Delta$ and $G \subset F$,
one has $G \in \Delta$.
An element $F \in \Delta$ is called a {\em face} of $\Delta$  and a maximal face (under inclusion) is called a {\em facet} of $\Delta$.
A face $F \in \Delta$ is called an {\em $i$-face} if $\#F=i+1$,
where $\# X$ denotes the cardinality of a finite set $X$.
The {\em dimension} of $\Delta$ is $\dim \Delta=\max\{\# F -1: F \in \Delta \}$.
For $0 \leq k \leq \dim \Delta$,
we write $\mathrm{skel}_k(\Delta)=\{F \in \Delta: \#F \leq k+1\}$ for the {\em $k$-skeleton} of $\Delta$.
Let $f_i=f_i(\Delta)$ be the number of $i$-faces of $\Delta$.
The {\em $h$-vector} $h(\Delta)=(h_0(\Delta),h_1(\Delta),\dots,h_d(\Delta))$ of $\Delta$
is a sequence of integers defined by
$$
h_i(\Delta)=\sum_{j=0}^i (-1)^{j-i} {d-j \choose i-j} f_{j-1}
$$
for $i=0,1,2,\dots,d$, where $d=\dim \Delta+1$ and where $f_{-1}=1$.
If $\Delta$ is the boundary complex of a simplicial polytope $P$,
we also call $h(\Delta)$ the $h$-vector of $P$.

Let $\Delta$ be a simplicial complex on vertex set $V$.
A subset $F \subset V$ is called a \emph{missing face} of $\Delta$ if $F \notin \Delta$ and all proper subsets of $F$ are faces of $\Delta$.
Note that the set of the missing faces of $\Delta$ determines $\Delta$ itself
since it determines all subsets of $V$ which are not in $\Delta$.
It is not hard to see that, by definition, the simplicial complex $\Delta(i)$, defined in the Introduction, is the simplicial complex whose missing faces are the missing faces $F$
of $\Delta$ with $\#F \leq i+1$.
In particular, for $j\leq i$, one has $\Delta(j)=\Delta(i)$ if and only if
$\Delta$ has no missing $k$-faces for $j+1\leq k\leq i$.

The following relation between face numbers and missing faces will be used in the sequel. It was first proved by Kalai \cite[Proposition 3.6]{Kalai:Aspects-94} when $d > 2r+1$,
and was later generalized by Nagel \cite[Corollary 4.8]{Nagel:Empty}.

\begin{lemma}
\label{lem:NoMissingFaces}
Let $\Delta$ be the boundary complex of a simplicial $d$-polytope.
If $h_{r-1}(\Delta)=h_r(\Delta)$ then $\Delta(r-1)=\Delta(d-r)$.
\end{lemma}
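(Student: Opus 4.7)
The plan is to invoke Stanley's $g$-theorem (which rests on hard Lefschetz for projective toric varieties) for simplicial polytopes and translate its Hilbert-function content into a vanishing of the graded Betti number $\beta_{1,k+1}(I_\Delta)$, which counts the missing $k$-faces of $\Delta$.

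First, Stanley's theorem combines part~(a) of the GLBC with Macaulay's bounds, so the $g$-vector $g_i:=h_i-h_{i-1}$ ($i\leq d/2$) is an M-sequence. The hypothesis $h_{r-1}=h_r$ gives $g_r=0$, forcing $g_{r+1}=\dots=g_{\lfloor d/2\rfloor}=0$; Dehn--Sommerville symmetry then yields $h_i=h_r$ throughout $r-1\leq i\leq d-r+1$.

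Next, pick a generic linear system of parameters $\Theta=\theta_1,\dots,\theta_d$ for $k[\Delta]$ and a generic linear form $\omega$, and set $A=k[\Delta]/(\Theta)$. By hard Lefschetz, multiplication by $\omega$ is injective $A_i\to A_{i+1}$ for $i\leq d/2-1$ and surjective for $i\geq d/2$, hence bijective precisely when $h_i=h_{i+1}$. By the preceding paragraph this happens for every $r-1\leq i\leq d-r$, so $(A/\omega A)_i=0$ for all $i\geq r$; equivalently, $I_\Delta+(\Theta)+(\omega)$ contains every monomial of the polynomial ring $S=k[x_1,\dots,x_n]$ of degree at least $r$.

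The main step is to convert this Hilbert-function vanishing into the vanishing of the graded Betti numbers $\beta_{1,k+1}(I_\Delta)$ for $r\leq k\leq d-r$. A clean route is through the generic initial ideal $\gin(I_\Delta)$, or equivalently the symmetric algebraic shift $\Delta^s$: a missing $k$-face of $\Delta$ corresponds to a minimal generator of $I_\Delta$ in degree $k+1$, and since $\Delta$ is Cohen--Macaulay (being the boundary of a simplicial polytope) a theorem of Aramova--Herzog identifies the graded Betti numbers of $I_\Delta$ with those of $I_{\Delta^s}$. For the shifted complex $\Delta^s$ the generators of the Stanley--Reisner ideal in each degree are directly controlled by the $g$-vector, so the vanishing $g_j=0$ for $r\leq j\leq d-r+1$ forces $\beta_{1,k+1}(I_{\Delta^s})=0$ in the same range, hence the same for $I_\Delta$. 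This last passage from Hilbert-function data to graded Betti numbers is the technical heart of the lemma and the main obstacle: Kalai handled it with rigidity theory in the range $d>2r+1$, and Nagel extended the result to the full range $r\leq d/2$ using the theory of empty simplices and graded Betti numbers.
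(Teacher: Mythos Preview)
The paper does not prove this lemma; it cites Kalai (for $d>2r+1$) and Nagel (for the full range $r\leq d/2$). Your proposal ultimately defers to the same two references for what you call the ``technical heart,'' so in substance you and the paper agree: both treat the statement as a quotation.

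One correction to your sketch: you claim that because $\Delta$ is Cohen--Macaulay, ``a theorem of Aramova--Herzog identifies the graded Betti numbers of $I_\Delta$ with those of $I_{\Delta^s}$.'' This is false in general; Cohen--Macaulayness does not force equality of graded Betti numbers under algebraic shifting (or passage to $\gin$). What is true, and sufficient for your purpose, is the inequality $\beta_{i,j}(S/I_\Delta)\leq\beta_{i,j}(S/\gin(I_\Delta))$ (upper semicontinuity in the Gr\"obner flat family): if the generic initial ideal has no minimal generator in degree $k+1$ then neither does $I_\Delta$. The direction you need is the easy one.

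It is worth noting that your outline can be completed without citing Nagel, using precisely the gin computation carried out later in the paper's proof of Theorem~\ref{thm:CM}. With $J=\gin(I_\Delta)$ and the plateau $h_{r-1}=\cdots=h_{d-r+1}$, injectivity of multiplication by $x_{n-d}$ on $S/(J+(x_{n-d+1},\dots,x_n))$ in degrees $\leq d-r$ rules out minimal generators of $J$ divisible by $x_{n-d}$ in degrees $\leq d-r+1$, while surjectivity at degree $r-1$ forces every degree-$r$ monomial of $\kk[x_1,\dots,x_{n-d-1}]$ into $J$, ruling out minimal generators not divisible by $x_{n-d}$ in degrees $>r$. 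Hence $J$, and therefore $I_\Delta$, has no minimal generators in degrees $r+1,\dots,d-r+1$, which is exactly $\Delta(r-1)=\Delta(d-r)$.
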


\begin{remark}
\label{rem:NoMissingFaces}
Nagel \cite{Nagel:Empty} writes a statement only for simplicial polytopes,
but his proof works for homology spheres admitting  the WLP which we study in Section 5.
\end{remark}

Next, we prove the uniqueness statements in Theorems \ref{thmIntro:GLBCpolytopes} and \ref{thmIntro:GLBCspheres}.
We start with some notations and definitions.
Let $\kk$ be a field.
For a simplicial complex $\Delta$,
let $\widetilde H_i(\Delta;\kk)$ be the $i$th reduced homology group of $\Delta$ with coefficients in $\kk$,
and let
$$\lk_\Delta(F)=\{ G \in \Delta: F \cup G \in \Delta,\ F \cap G = \emptyset\}$$
be the {\em link} of $F$ in $\Delta$.
A $d$-dimensional simplicial complex $\Delta$
is said to be a {\em homology $d$-sphere} (over $\kk$)
if the homology groups
$\widetilde H_{d-\#F-i} (\lk_\Delta(F);\kk)$ are isomorphic to $\kk$ for $i=0$ and vanish for all $i>0$, for all $F \in \Delta$ (including the empty face $\emptyset$).
Also, a {\em homology $d$-ball} (over $\kk$) is
a $d$-dimensional simplicial complex $\Delta$ such that the homology groups $\widetilde H_{d-\#F-i} (\lk_\Delta(F);\kk)$ are either $\kk$ or $0$ for $i=0$ and vanish for $i>0$, for all $F \in \Delta$,
and moreover, its boundary complex
$$\partial \Delta =\{F \in \Delta: \widetilde H_{d-\#F} (\lk_\Delta(F);\kk) =0\}$$
is a homology $(d-1)$-sphere.
We say that a simplicial complex $\Delta$ is a \emph{triangulation} of a topological space $X$ if its geometric realization is homeomorphic to $X$.
Note that a triangulation of a $d$-sphere (resp.\ $d$-ball) is a homology $d$-sphere (resp.\ $d$-ball) over any field.

Let $\Delta$ be a homology $d$-ball.
The faces in $\Delta-\partial \Delta$ are called the \emph{interior faces} of $\Delta$.
If $\Delta$ has no interior $k$-faces for $k \leq d-r$ then $\Delta$ is said to be \emph{$(r-1)$-stacked}.
An {\em $(r-1)$-stacked} sphere (resp.\ homology sphere) is the boundary complex
of an $(r-1)$-stacked triangulation of a ball (resp.\ homology ball).

Recall that a triangulation of a simplicial $d$-polytope $P$ with boundary complex $\Delta$
is a triangulation $K$ of a $d$-ball such that $\partial K=\Delta$.
McMullen \cite[Theorem 3.3]{McMullen:Triangulations} proved that,
for $r \leq \frac d 2$, an $(r-1)$-stacked triangulation $K$ of a simplicial $d$-polytope $P$ is unique.
Moreover, Bagchi and Datta \cite[Corollary 3.6]{Bagchi-Datta:StellatedSpheres}
proved that such a triangulation must be equal to $\Delta(d-r)$
(they actually proved a more general statement for PL-spheres).
We generalize these statements for homology spheres
based on an idea of Dancis \cite{Dancis} who proved that a homology $d$-sphere is determined by its $\lceil \frac{d}{2}\rceil$-skeleton (generalizing a previous work of Perles who showed it for polytopes).
In particular, our result answers \cite[Question 6.4]{Bagchi-Datta:StellatedSpheres}.

\begin{theorem}\label{thm:Uniqueness}
Let $\Delta$ be a homology $(d-1)$-sphere and $1\leq r\leq \frac{d+1}{2}$ an integer.
\begin{itemize}
\item[(i)]
If $\Delta(d-r)$ is a homology $d$-ball with $\partial \Delta(d-r)=\Delta$
then it is $(r-1)$-stacked.
\item[(ii)]
If $\Delta'$ is an $(r-1)$-stacked homology $d$-ball with $\partial \Delta'=\Delta$
then $\Delta'=\Delta(d-r)$.
\end{itemize}
\end{theorem}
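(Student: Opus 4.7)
For part (i), the key observation is that for any subset $G$ with $|G| \leq d-r+1$, membership $G \in \Delta(d-r)$ reduces to $G \in \Delta$: the defining condition requires all subsets of $G$ of size at most $d-r+1$ to lie in $\Delta$, and $G$ is itself such a subset. Hence $\skel_{d-r}(\Delta(d-r)) = \skel_{d-r}(\Delta)$. Since $\partial \Delta(d-r) = \Delta$ by hypothesis, every face of $\Delta(d-r)$ of dimension at most $d-r$ lies in the boundary, so $\Delta(d-r)$ is $(r-1)$-stacked. For part (ii), the inclusion $\Delta' \subseteq \Delta(d-r)$ follows from the same identification: $(r-1)$-stackedness forces $\skel_{d-r}(\Delta') \subseteq \partial \Delta' = \Delta$, so every subset of any $F \in \Delta'$ of size at most $d-r+1$ is a face of $\Delta'$ of dimension at most $d-r$, hence lies in $\Delta$, placing $F \in \Delta(d-r)$.

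The reverse inclusion $\Delta(d-r) \subseteq \Delta'$ is the core of the theorem. A minimal element $F$ of $\Delta(d-r) \setminus \Delta'$ is a missing face of $\Delta'$; if $|F| \leq d-r+1$ then $F \in \Delta \subseteq \Delta'$, a contradiction, so the task is to show that $\Delta'$ has no missing face of size at least $d-r+2$. I would proceed by induction on $d$, passing to the link $\lk_{\Delta'}(v)$ of a vertex $v \in F$. Since $v$ lies in $\Delta$ by stackedness, $\lk_{\Delta'}(v)$ is a homology $(d-1)$-ball with boundary $\lk_\Delta(v)$; a direct check on interior faces shows it is again $(r-1)$-stacked, and $F \setminus \{v\}$ is a missing face of the link. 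When $r \leq d/2$, the inductive hypothesis then forces $|F| - 1 \leq d-r$, the desired contradiction.

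The main obstacle is the parity case $r = (d+1)/2$ with $d$ odd, for which the induction on links is unavailable ($\lk_{\Delta'}(v)$ has dimension $d-1$, and $r > d/2$). In this regime, the extra information that $F \in \Delta(d-r)$ becomes essential: it forces $\partial F = \skel_{d-r}(2^F) \subseteq \Delta$, placing the $(d-r)$-sphere $\partial F$ entirely inside the homology $(d-1)$-sphere $\Delta$. The hypothesis $r \leq (d+1)/2$ means $d-r \geq \lceil (d-1)/2 \rceil$, exactly the threshold where Dancis's reconstruction theorem — that a homology $(d-1)$-sphere is determined by its $\lceil (d-1)/2 \rceil$-skeleton — applies. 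Adapting Dancis's argument to our setting, either by direct skeletal reconstruction within $\Delta$ or by doubling $\Delta'$ across its boundary to form a homology $d$-sphere and reconstructing there, yields the required contradiction and completes the proof.
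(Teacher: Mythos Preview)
Your treatment of part (i) and of the inclusion $\Delta' \subseteq \Delta(d-r)$ is correct and matches the paper. Your link-induction for the reverse inclusion when $r \leq d/2$ is also valid and is a genuinely different, more elementary route than the paper takes: the paper does not induct on dimension at all but gives a single Alexander-duality argument covering every $r \leq (d+1)/2$ at once.

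The gap is the boundary case $r=(d+1)/2$. Your suggestion to ``double $\Delta'$ across its boundary'' does not produce a sphere here: since $\Delta'$ is $(r-1)$-stacked with $d-r \geq 0$ it has no interior vertices, so the two copies in the double share \emph{all} vertices and therefore coincide as abstract simplicial complexes---the double collapses back to $\Delta'$ itself. The alternative ``direct skeletal reconstruction within $\Delta$'' is too vague to be a proof; Dancis's theorem reconstructs a \emph{sphere} from its middle skeleton, but the object whose missing faces you must control is the ball $\Delta'$, and no homology $d$-sphere containing $\Delta'$ as an induced subcomplex has yet been produced. (Incidentally, the equality $\partial F = \skel_{d-r}(2^F)$ you invoke holds only when $\#F = d-r+2$.)

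The paper builds the auxiliary sphere differently: set $S = \Delta' \cup (\{v\}*\Delta)$, coning off the boundary with a single \emph{new} vertex $v$. Then $S$ is a homology $d$-sphere in which $\Delta'$ is an induced subcomplex. For a would-be missing face $F$ of $\Delta'$ with $\#F = k+1 > d-r+1$, Alexander duality gives $\widetilde H_{k-1}(S|_F;\kk) \cong \widetilde H_{d-k}(S|_{V\setminus F};\kk)$; since $d-k \leq r-1 \leq d-r$, the complex $S|_{V\setminus F}$ agrees through dimension $d-k$ with the cone $(\{v\}*\Delta)|_{V\setminus F}$ and contains it, forcing $\widetilde H_{d-k}(S|_{V\setminus F};\kk)=0$ and hence $F \in S|_F \subseteq \Delta'$. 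This argument uses only $r \leq (d+1)/2$, so once you have it your separate induction becomes unnecessary.
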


\begin{proof}
The statement (i) is obvious since $\Delta(d-r)$ and $\partial \Delta(d-r)=\Delta$ have the same $(d-r)$-skeleton.
We prove (ii).
Since $\Delta'$ is $(r-1)$-stacked, $\Delta'$ has the same $(d-r)$-skeleton as $\Delta$,
and therefore has the same $(d-r)$-skeleton as $\Delta(d-r)$ by definition.
Thus what we must prove is that $\Delta'$ has no missing faces of cardinality $> d-r+1$.
Let $F$ be a $(k+1)$-subset of $[n]$ with $k > d-r$
such that all its proper subsets are in $\Delta'$.
We claim $F \in \Delta'$.

Consider the homology $d$-sphere $S=\Delta'\cup (\{v\}*\Delta)$, where $v$ is a new vertex
and where $\{v\}*\Delta= \Delta \cup \{ \{v\} \cup F: F \in \Delta\}$ is the cone of $\Delta$ with the vertex $v$.
For a subset $W \subset V$, where $V$ is the vertex set of $S$,
let $S|_W=\{G \in S: G \subset W\}$ be the induced subcomplex of $S$ on $W$.
Since all proper subsets of $F$ are in $\Delta'$ and $\Delta'$ is an induced subcomplex of $S$,
to prove $F \in \Delta'$,
it is enough to show that $S|_F$ is not a $(k-1)$-sphere, equivalently that $\widetilde H_{k-1} (S|_F;\kk)=0$.

Since $S-S|_F$ is homotopy equivalent to $S|_{V-F}$ (e.g.\ \cite[Lemma 70.1]{Munkres}),
by Alexander duality (e.g.\ \cite[Theorem 71.1]{Munkres}) and the universal coefficient theorem with field coefficients
we have
$$
\widetilde H_{k-1} (S|_F;\kk) \cong \widetilde H_{d-k}(S-S|_F;\kk) \cong \widetilde H_{d-k}(S|_{V-F};\kk),
$$
so we need to show $\widetilde H_{d-k}(S|_{V-F};\kk)=0$.
Since $d-k \leq r-1 \leq d-r$,
we have $\mathrm{skel}_{d-k}(S|_{V-F})=\mathrm{skel}_{d-k}((\{v\} * \Delta)|_{V-F})$ and $S|_{V-F} \supset (\{v\} * \Delta)|_{V-F}$.
Then, by the definition of the simplicial homology,
we have
$$\dim_\kk \widetilde H_{d-k}(S|_{V-F};\kk) \leq \dim_\kk \widetilde H_{d-k}((\{v\} * \Delta)|_{V-F};\kk).$$
Recall that $v \not \in F$.
The right-hand side of the above inequality is equal to zero
since $(\{v\}*\Delta)|_{V-F}=\{v\}*(\Delta|_{V-F-\{v\}})$ is a cone.
Hence $\widetilde H_{d-k}(S|_{V-F}:\kk)=0$.
\end{proof}

Unlike $(r-1)$-stacked polytopes with $ r \leq \frac d 2$,
$(\frac {d-1} 2)$-stacked simplicial $d$-polytopes cannot be characterized by their $h$-vectors
since $h_{ ({d-1})/ 2}=h_{({d+1})/ 2}$ holds for all simplicial $d$-polytopes when $d$ is odd.
On the other hand, Theorem \ref{thm:Uniqueness} says that
$(\frac {d-1} 2)$-stacked simplicial $d$-polytopes still have a nice combinatorial property.
It would be of interest to have a nice combinatorial characterization of these polytopes.

%%%%%%%%%%%%%%%%%%%%%%%%%%%%%%%%%%%%%%%%%%%%%%%5
\section{Cohen--Macaulayness}\label{sec:CM}

In this section, we prove that the simplicial complexes $\Delta(d-r)=\Delta(r-1)$ in Theorems \ref{thmIntro:GLBCpolytopes} and \ref{thmIntro:GLBCspheres} (the equality holds by Lemma \ref{lem:NoMissingFaces} and Remark \ref{rem:NoMissingFaces} respectively)
satisfy a nice algebraic condition, called the Cohen--Macaulay property.
We first introduce some basic tools in commutative algebra.

\subsection*{Stanley--Reisner rings}
Let $S=\kk[x_1,\dots,x_n]$
be a polynomial ring over an infinite field $\kk$.
For a subset $F \subset [n]=\{1,2,\dots,n\}$,
we write $x_F=\prod _{k \in F} x_k$.
For a simplicial complex $\Delta$ on $[n]$,
the ring
$$\kk[\Delta]=S/I_\Delta$$
where $I_\Delta=(x_F: F \subset [n],\ F \not \in \Delta)$,
is called the {\em Stanley--Reisner ring} of $\Delta$.

The simplicial complex $\Delta(i)$ has a simple expression in terms of Stanley--Reisner rings.
For a homogeneous ideal $I \subset S$, let $I_{\leq k}$ be the ideal generated by all elements in $I$ of degree $ \leq k$.
Since the missing faces of $\Delta$ correspond to the minimal generators of $I_\Delta$
and since $\Delta(i)$ is the simplicial complex whose missing faces are the missing faces $F$ of $\Delta$
with $\#F \leq i+1$,
one has
$$I_{\Delta(i)}=(I_\Delta)_{\leq i+1}.$$

\subsection*{Cohen--Macaulay property}
Let $I \subset S$ be a homogeneous ideal and $R=S/I$.
The Krull dimension $\dim R$ of $R$ is the minimal number $k$
such that there is a sequence of linear forms $\theta_1,\dots,\theta_k \in S$
such that $\dim_\kk S/(I+(\theta_1,\dots,\theta_k))< \infty$.
If $d=\dim R$, then a sequence $\Theta=\theta_1,\dots,\theta_d$ of linear forms such that
$\dim_\kk S/(I+(\Theta))< \infty$ is called a {\em linear system of parameters} of $R$ (l.s.o.p.\ for short).
A sequence of homogeneous polynomials $f_1,\dots,f_r$ of positive degrees is called
a {\em regular sequence} of $R$ if $f_i$ is a non-zero divisor of $S/(I+(f_1,\dots,f_{i-1}))$
for all $i=1,2,\dots,r$.
We say that $R$ is {\em Cohen--Macaulay} if every l.s.o.p.\ of $R$ is a regular sequence of $R$.

A simplicial complex $\Delta$ is said to be {\em Cohen--Macaulay} (over $\kk$)
if $\kk[\Delta]$ is a Cohen--Macaulay ring.
The following topological criterion for the Cohen--Macaulay property was proved by Reisner \cite{Reisner}.

\begin{lemma}[Reisner's criterion]\label{lem:Reisner}
A simplicial complex $\Delta$ is Cohen--Macaulay (over $\kk$) if and only if,
for any face $F \in \Delta$,
$\widetilde H_i(\lk_\Delta(F); \kk)=0$ for all $i \ne \dim \lk_\Delta(F)$.
\end{lemma}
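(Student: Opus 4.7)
The plan is to deduce Reisner's criterion from Hochster's formula, which computes the local cohomology of $\kk[\Delta]$ in terms of the topology of links in $\Delta$. First I would invoke the standard characterization: a finitely generated graded $S$-module $M$ of Krull dimension $d$ is Cohen--Macaulay if and only if $H^i_{\mideal}(M)=0$ for every $i<d$, where $\mideal=(x_1,\dots,x_n)$ is the graded maximal ideal of $S$. (Vanishing for $i>d$ is automatic by Grothendieck's theorem, and the $S$-module structure is all one needs, since $\kk[\Delta]$ is a quotient of $S$.)

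Next I would appeal to Hochster's formula, which expresses each $\ZZ^n$-graded component of the local cohomology of $\kk[\Delta]$ as
$$\dim_\kk H^i_{\mideal}(\kk[\Delta])_{\aaa} = \dim_\kk \widetilde H_{i-|F|-1}(\lk_\Delta(F);\kk),$$
whenever $\aaa \in \ZZ^n$ satisfies $a_j \le 0$ for all $j$ and its support $F=\{j : a_j<0\}$ lies in $\Delta$; in all other multidegrees the dimension is zero. The standard derivation writes out the $\ZZ^n$-graded \v{C}ech complex on $x_1,\dots,x_n$, restricts it to a fixed multidegree $\aaa$, and identifies the resulting strand, up to a degree shift, with the reduced chain complex of $\lk_\Delta(F)$.

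Combining these two inputs, the ring $\kk[\Delta]$ of Krull dimension $d=\dim\Delta+1$ is Cohen--Macaulay if and only if $\widetilde H_j(\lk_\Delta(F);\kk)=0$ for every face $F \in \Delta$ and every $j<d-|F|-1$. Since $\dim \lk_\Delta(F) \le d-|F|-1$, and simplicial homology vanishes in degrees strictly above the dimension of the complex, this range is equivalent to Reisner's condition $\widetilde H_i(\lk_\Delta(F);\kk)=0$ for all $i \neq \dim \lk_\Delta(F)$.

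The principal obstacle in this plan is Hochster's formula itself, which is the key algebraic-topological input: it is what turns a statement about depth (vanishing of local cohomology) into a statement about links (vanishing of reduced homology). Once the multigraded \v{C}ech complex is set up and each of its graded strands is identified with the reduced simplicial chain complex of the appropriate link, the rest of the derivation is essentially bookkeeping, and the final passage to Reisner's criterion reduces to a short comparison of vanishing ranges.
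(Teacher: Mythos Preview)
The paper does not supply a proof of this lemma at all; it merely quotes the statement and cites Reisner's original paper. Your proposal via the local--cohomology characterization of depth together with Hochster's formula is the standard modern derivation (it is essentially the argument in Stanley's and in Bruns--Herzog's textbooks, the latter of which the paper itself cites for Hochster's formula), and it is correct.

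One small point is worth making explicit in your final step. From Hochster's formula you obtain the condition ``$\widetilde H_j(\lk_\Delta(F);\kk)=0$ for all $F\in\Delta$ and all $j<d-|F|-1$''. Passing from this to Reisner's condition ``$\widetilde H_i(\lk_\Delta(F);\kk)=0$ for $i\ne\dim\lk_\Delta(F)$'' is immediate in one direction, but the converse implicitly uses that $\dim\lk_\Delta(F)=d-|F|-1$ for every face $F$, i.e.\ that $\Delta$ is pure. Purity does follow from Reisner's condition by a short induction on $\dim\Delta$ (links in $\Delta$ inherit the condition, and a non-pure complex of positive dimension satisfying the condition would force a link of strictly smaller dimension to be non-pure as well), so this is not a real gap---but it is a step, not just bookkeeping.
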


\subsection*{The weak Lefschetz property}
Let $I \subset S$ be a homogeneous ideal such that $R=S/I$ has dimension $0$.
We write $R=\bigoplus_{i=0}^s R_i$, where $R_i$ is the homogeneous component of $R$ of degree $i$ and where $R_s \ne 0$.
We say that $R$ has the {\em weak Lefschetz property} (WLP for short) if there is a linear form $w \in R_1$,
called a {\em Lefschetz element} of $R$,
such that the multiplication $\times w :R_k \to R_{k+1}$
is either injective or surjective for all $k$.

We say that a ring $R=S/I$ of dimension $d>0$,
where $I$ is a homogeneous ideal, has the WLP if it is Cohen--Macaulay
and there is an l.s.o.p.\ $\Theta$	of $R$ such that $S/(I+(\Theta))$
has the WLP.
Also, a simplicial complex $\Delta$ is said to have the WLP (over $\kk$)
if $\kk[\Delta]$ has the WLP.
It is known that the boundary complex of a simplicial polytope has the WLP
over $\QQ$.
See \cite[Section 5.2]{Fulton}.

It is known that, for any homogeneous ideal $I \subset S$,
the Hilbert series $H(S/I,t)=\sum_{i=0}^\infty (\dim_\kk (S/I)_i) t^i$ of the ring $S/I$ can be written in the form
$$H(S/I,t)= \frac {h_0+h_1t + \dots + h_s t^s} {(1-t)^d}$$
where $d=\dim S/I$ and where $h_s \ne 0$.
See \cite[Corollay 4.1.8]{Bruns-Herzog}.
The vector $h(S/I)=(h_0,h_1,\dots,h_s)$ is called the $h$-vector of $S/I$.
If $S/I$ has the WLP then its $h$-vector is unimodal,
namely it satisfies $h_0 \leq \dots \leq h_p \geq h_{p+1} \geq \dots \geq h_s$
for some $p$.
Indeed, let $R=S/(I+(\Theta))$, where $\Theta$ is an l.s.o.p.\ of $S/I$.
Then $h_k=\dim_\kk R_k$ for all $k$.
If $R$ has the WLP and $h_p \geq h_{p+1}$ for some $p$,
then the multiplication $w : R_{k} \to R_{k+1}$ is surjective
for $k=p$. The multiplication map is also surjective for all $k \geq p$ as $S$ is generated by elements of degree $1$ and $p+1\geq 1$,
and we have $h_p \geq \dots \geq h_s$.

\subsection*{Generic initial ideals}
Here we briefly recall generic initial ideals.
We do not give details on this subject.
\cite{Gr} and \cite[Section 4]{HibiHerzog} are good surveys on generic initial ideals.

Let $>_\rev$ be the degree reverse lexicographic order induced by the ordering $x_1 >_\rev  \dots >_\rev x_n$.
For a homogeneous ideal $I \subset S$,
let $\init_{>_\rev}(I)$ be the initial ideal of $I$ w.r.t.\ $>_\rev$.
Let $\mathrm{GL}_n(\kk)$ be the general linear group with coefficients in $\kk$.
Any $\varphi=(a_{ij}) \in \mathrm{GL}_n(\kk)$ induces an automorphism of $S$,
again denoted by $\varphi$,
$$\varphi\big(f(x_1,\dots,x_n)\big)=
f\left(\sum_{k=1}^n a_{k1} x_k,\dots,\sum_{k=1}^n a_{kn} x_k\right)$$
for any $f \in S$.
It was proved by Galligo that $\init_{>_\rev}(\varphi(I))$ is constant for a generic choice of $\varphi \in \mathrm{GL}_n(\kk)$.
See \cite[Theorem 1.27]{Gr}.
This monomial ideal $\init_{>_\rev}(\varphi(I))$ is called the {\em generic initial ideal} of $I$ w.r.t.\ $>_{\rev}$,
and denoted $\gin(I)$.
We need the following well-known property on the WLP.

\begin{lemma}
\label{gin}
Let $I \subset S$ be a homogeneous ideal and $d=\dim S/I$.
\begin{enumerate}
\item[(i)] $S/I$ is Cohen--Macaulay if and only if $S/\gin(I)$ is Cohen--Macaulay.
\item[(ii)] $S/I$ has the WLP if and only if $S/\gin(I)$ has the WLP.
Moreover, if $S/I$ has the WLP, then
$x_n,\dots,x_{n-d+1}$ is an l.s.o.p.\ of $S/\gin(I)$
and
$x_{n-d}$ is a Lefschetz element of $S/(\gin(I)+(x_n,\dots,x_{n-d+1}))$.
\end{enumerate}
\end{lemma}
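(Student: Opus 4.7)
Proof proposal. The result is standard in the theory of generic initial ideals; my plan is to reduce both parts to two classical properties of the reverse lexicographic order (see \cite{Gr} or \cite{HibiHerzog}): (Fact 1, depth preservation) $\depth S/I = \depth S/\gin(I)$, and this common number equals the maximal $k$ such that $x_n, x_{n-1}, \ldots, x_{n-k+1}$ is a regular sequence on $S/\gin(I)$; (Fact 2, smallest-variable quotient) $\init_{>_\rev}(J : x_m) = \init_{>_\rev}(J) : x_m$ for every homogeneous ideal $J \subseteq \kk[x_1, \ldots, x_m]$. Part (i) is then immediate: $S/I$ is Cohen--Macaulay iff its depth equals its Krull dimension, and both numbers are preserved under $\gin$ (dimension via equality of Hilbert series, depth by Fact 1).

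For part (ii), assume $S/I$ is CM of dimension $d$ (automatic under WLP by part (i)) and fix a generic $\varphi \in \mathrm{GL}_n(\kk)$ so that $\gin(I) = \init_{>_\rev}(\varphi(I))$. Set
$$
I' = \varphi(I) + (x_n, \ldots, x_{n-d+1}), \quad A' = S/I', \quad B = S/\bigl(\gin(I) + (x_n, \ldots, x_{n-d+1})\bigr).
$$
By Fact 1 the sequence $x_n, \ldots, x_{n-d+1}$ is regular on both $S/\varphi(I)$ and $S/\gin(I)$, so $A'$ and $B$ share the Hilbert series $H(S/I,t)(1-t)^d$, and in fact $\init_{>_\rev}(I') = \gin(I) + (x_n, \ldots, x_{n-d+1})$. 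Quotienting by $(x_n, \ldots, x_{n-d+1})$ lands us in $S' = \kk[x_1, \ldots, x_{n-d}]$, where $x_{n-d}$ is the smallest variable in the reverse lex order. Writing $\bar I'$ for the image of $I'$ in $S'$, Fact 2 applied in $S'$ yields $H(S'/(\bar I' : x_{n-d}), t) = H(S'/(\init_{>_\rev}(\bar I') : x_{n-d}), t)$; since the kernel of $\times x_{n-d}$ on $A'$ has Hilbert series $H(A', t) - H(S'/(\bar I' : x_{n-d}), t)$ and the analogous identity holds for $B$, the multiplication maps $\times x_{n-d}$ on $A'$ and on $B$ have equal ranks in every degree.

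Now suppose $S/I$ has the WLP. Genericity of $\varphi$ ensures that $x_n, \ldots, x_{n-d+1}$ is a generic l.s.o.p.\ on $S/\varphi(I) \cong S/I$ and that $x_{n-d}$ is a generic linear form on its Artinian reduction, so by standard openness of the WLP condition, $\times x_{n-d}$ acts on $A'$ with maximum rank in each degree. The previous paragraph transports this to $B$, so $x_{n-d}$ is a Lefschetz element of $B$, proving the WLP of $S/\gin(I)$ with the explicit l.s.o.p.\ and Lefschetz element claimed. The converse implication (WLP of $S/\gin(I)$ implies WLP of $S/I$) follows by reading the rank equality in the opposite direction.

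The main obstacle is the claim $\init_{>_\rev}(I') = \gin(I) + (x_n, \ldots, x_{n-d+1})$, i.e., that quotienting by the $d$ smallest variables commutes with $\init_{>_\rev}$; together with the application of Fact 2 inside the subring $S'$, this is where the special combinatorics of the reverse lex order genuinely enters. Everything else is Hilbert-function bookkeeping.
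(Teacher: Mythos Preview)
Your proposal is correct and follows essentially the same route as the paper. The paper's own argument is just a pointer to the literature (\cite[Corollary~4.3.18 and Lemma~4.3.7]{HibiHerzog}) together with the remark that for generic $\varphi$ the last $d{+}1$ variables become generic linear forms for $S/\varphi(I)$; your Facts~1 and~2 are precisely the Bayer--Stillman properties underlying those citations, and your rank-comparison via colon ideals is exactly how Lemma~4.3.7 in \cite{HibiHerzog} is proved. The only point worth tightening is the converse in~(ii): to conclude that $x_{n-d}$ is a Lefschetz element of $B$ from the mere assumption that $S/\gin(I)$ has the WLP, you implicitly use that $\gin(I)$ is Borel-fixed (so the generic Lefschetz behavior is already realized by the last variables); stating this would remove any appearance of circularity.
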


See \cite[Corollary 4.3.18]{HibiHerzog} for the first statement.
The second statement follows from \cite[Lemmas 4.3.7]{HibiHerzog}
together with the facts that, for $\theta_1,\dots,\theta_{d+1} \in S$ generic linear forms,
$\theta_1,\dots,\theta_d$ is an l.s.o.p.\ of $S/I$ and $\theta_{d+1}$ is a Lefschetz element of $S/(I+(\theta_1,\dots,\theta_d))$,
and that for a generic choice of $\varphi \in \mathrm{GL}_n(K)$ the linear forms
$x_n,\dots,x_{n-d}$ are generic for $S/\varphi(I)$.

The following result due to Mark Green \cite[Proposition 2.28]{Gr} is crucial to prove the Cohen--Macaulay property of $\Delta(r-1)$.

\begin{lemma}[Crystallization Principle]
\label{crysterization}
Suppose $\mathrm{char}(\kk)=0$.
Let $I \subset S$ be a homogeneous ideal generated by elements of degree $\leq m$.
If $\gin(I)$ has no minimal generators of degree $m+1$
then $\gin(I)$ is generated by elements of degree $\leq m$.
%and $\reg(S/I) \leq m-1$.
\end{lemma}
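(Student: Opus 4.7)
The plan is to use Galligo's theorem: in characteristic zero, $J := \gin(I)$ is Borel-fixed (strongly stable), meaning $(x_i/x_j)u \in J$ whenever $u$ is a monomial in $J$ with $x_j \mid u$ and $i < j$. From this one derives the useful criterion that a monomial $u \in J$ is a minimal generator if and only if $u/x_{\max(u)} \notin J$, where $\max(u)$ denotes the largest index of a variable dividing $u$ (if $u/x_k \in J$ for some other $k \leq \max(u)$, strong stability lets one transport the $x_{\max(u)}$ down to $x_k$, yielding also $u/x_{\max(u)} \in J$).

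The argument proceeds by contradiction. Suppose $J$ has a minimal generator of degree exceeding $m$, and let $d+1$ be the least such degree. The hypothesis excludes $d+1 = m+1$, so $d \geq m+1$; minimality of $d+1$ then gives $J_i = (J_{\leq m})_i$ for every $i \leq d$. Pick a minimal generator $u \in J_{d+1}$ and set $v := u / x_{\max(u)}$, so that $v \notin J$. Applying strong stability to $u$ yields $x_i v = (x_i/x_{\max(u)}) u \in J$ for every $i \leq \max(u)$; equivalently, $(x_1, \dots, x_{\max(u)}) \cdot v \subseteq J$. The target is to promote this colon containment to the membership $v \in J$, which directly contradicts the minimality of $u$.

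This promotion is the main obstacle, and it is where the reverse-lex nature of $\gin$ enters essentially, since mere strong stability permits gaps in generating degrees. I would run an induction on the number of variables via the Bayer--Stillman specialization identity $\gin(I + (\ell)) = \gin(I) + (x_n)$, valid for a generic linear form $\ell$. Reducing modulo $x_n$, the quotient $\bar{J} := J/(x_n) \subset \kk[x_1, \dots, x_{n-1}]$ is the rev-lex $\gin$ of a hyperplane section of $I$, is still generated in degrees $\leq m$, and inherits the hypothesis of no minimal generator in degree $m+1$, since the minimal generators of $\bar{J}$ correspond exactly to the $x_n$-free minimal generators of $J$. The inductive hypothesis then gives $\bar{J}$ generated in degrees $\leq m$; combined with Borel moves that replace each occurrence of $x_n$ in $u$ by a smaller-index variable, this forces $v \in J$. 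The delicate bookkeeping lies in the case $\max(u) = n$ with $x_n$ appearing in $u$ with high multiplicity, where $u$ itself is killed by the specialization and one must first produce an $x_n$-free Borel translate of $u$ of the same degree to which the inductive hypothesis applies, then transfer the resulting factorization back to a factorization of $v$ in $J$.
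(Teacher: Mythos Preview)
The paper does not prove this lemma; it is quoted from Green \cite[Proposition~2.28]{Gr}, so there is no ``paper's proof'' to compare against.

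Your outline is in the right spirit---Borel-fixedness of $\gin(I)$ and the Bayer--Stillman hyperplane-section identity are the natural tools---but the inductive step has a genuine gap. After applying the inductive hypothesis you retain only: (a) $J=\gin(I)$ is strongly stable with no minimal generator in degree $m+1$, and (b) $\bar J=(J+(x_n))/(x_n)$ is generated in degrees $\le m$. These two facts by themselves do \emph{not} force $J$ to be generated in degrees $\le m$. For $n=2$, $m=1$, the ideal $J=(x_1,x_2^{\,3})$ is strongly stable, has $\bar J=(x_1)$ generated in degree $1$, and has no generator in degree $2$, yet $x_2^{\,3}$ is a minimal generator in degree $3$. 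Your ``transfer back'' maneuver fails outright here: the $x_2$-free Borel translate $x_1^{\,3}$ does factor through the generator $x_1\in\bar J$, but no Borel move can carry that factorization back to exhibit $v=x_2^{\,2}$ as an element of $J$. Of course $(x_1,x_2^{\,3})$ is not the $\gin$ of any ideal generated in degree $\le 1$, so the Crystallization Principle itself is not violated; the point is that your argument never uses that information beyond feeding the first inductive call.

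What is missing is a direct appeal to the Hilbert-function equality $\dim_{\kk}J_k=\dim_{\kk}I_k$ combined with $I_k=S_{k-m}\cdot I_m$ for $k>m$. The hypothesis that $J$ has no generator in degree $m+1$ is exactly the equality case $\dim_{\kk}(S_1\cdot I_m)=|S_1\cdot\gin(I_m)|$ of the general inequality $\dim_{\kk}(S_1\cdot W)\ge|S_1\cdot\gin(W)|$, and the heart of the proof is to propagate this equality to all $S_j\cdot I_m$, yielding $J_{m+j}=S_j\cdot J_m$. Whether one packages this as a direct growth estimate or folds it into a sharpened induction on $n$, some such dimension count is unavoidable; pure Borel combinatorics on $J$ alone cannot close the argument.
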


\begin{theorem}\label{thm:CM}
Suppose $\mathrm{char}(\kk)=0$.
Let $I \subset S$ be a homogeneous ideal such that $S/I$ has the WLP,
and let $h(S/I)=(h_0,h_1,\dots,h_s)$.
If $h_{r-1} = h_r = h_{r+1}$ for some $ 1 \leq r \leq s-1$,
then $S/I_{\leq r}$ is Cohen--Macaulay of dimension $\dim S/I +1$.
\end{theorem}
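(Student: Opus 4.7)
The plan is to pass to the generic initial ideal $J = \gin(I)$ with respect to the degree reverse lexicographic order, and to exploit the Borel-fixedness of $J$. The first step is to identify $\gin(I_{\leq r}) = J_{\leq r}$: for any linear change of coordinates $\varphi$ one has $\varphi(I_{\leq r}) = \varphi(I)_{\leq r}$, and for the reverse lex order, taking the initial ideal commutes with the truncation $(-)_{\leq r}$ (verifiable by comparing generators in each degree). By Lemma~\ref{gin}(i) it then suffices to show that $S/J_{\leq r}$ is Cohen-Macaulay of dimension $d+1$, where $d = \dim S/I$. Lemma~\ref{gin}(ii) provides $x_n,\dots,x_{n-d+1}$ as an l.s.o.p.\ for $S/J$ and $x_{n-d}$ as a Lefschetz element on the Artinian reduction $B = S/(J+(x_n,\dots,x_{n-d+1}))$; moreover $J$ involves only the variables $x_1,\dots,x_{n-d}$. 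Passing to $\bar S = \kk[x_1,\dots,x_{n-d}]$ with $\bar J$ denoting $J$ viewed there, the task reduces to showing that $\bar S/\bar J_{\leq r}$ is Cohen-Macaulay of Krull dimension $1$, since $S/J_{\leq r}$ is a polynomial extension of this by the remaining $d$ variables.

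The core of the proof consists of two structural claims about the strongly stable ideal $\bar J$, both driven by the WLP and the plateau $h_{r-1}=h_r=h_{r+1}$. First, an Eliahou-Kervaire-style identification equates the number of minimal generators of $\bar J$ of degree $k+1$ divisible by $x_{n-d}$ with $\dim\ker(\times x_{n-d}\colon B_k \to B_{k+1})$; the WLP together with unimodality and the plateau hypothesis force this kernel to vanish for every $k+1 \leq r$, so no minimal generator of $\bar J$ of degree $\leq r$ involves $x_{n-d}$. Second, WLP also yields surjectivity of $\times x_{n-d}\colon B_{r-1}\to B_r$, i.e.\ $\bar S_r = x_{n-d}\bar S_{r-1} + \bar J_r$; for each $i \leq n-d-1$, writing $x_i^r = x_{n-d}f + g$ with $g \in \bar J_r$ and extracting the coefficient of the $x_{n-d}$-free monomial $x_i^r$ in $g$ (using that $\bar J$ is a monomial ideal) forces $x_i^r \in \bar J$.

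Combining these two properties, $\bar J_{\leq r}$ is extended from a zero-dimensional monomial ideal $\widetilde J$ of $\kk[x_1,\dots,x_{n-d-1}]$, so $\bar S/\bar J_{\leq r} \cong (\kk[x_1,\dots,x_{n-d-1}]/\widetilde J)[x_{n-d}]$ is the polynomial extension by $x_{n-d}$ of an Artinian ring, hence Cohen-Macaulay of dimension $1$. Translating back through the gin reduction gives that $S/I_{\leq r}$ is Cohen-Macaulay of dimension $d+1$. The main obstacle is the first structural claim, which requires uniform control of minimal generators of $\bar J$ divisible by $x_{n-d}$ across all degrees up to $r$; the Crystallization Principle (Lemma~\ref{crysterization}) serves as the essential backbone here, constraining the degree $r+1$ minimal generators of $\gin(I_{\leq r})$ and thereby anchoring the structural conclusion on $\bar J_{\leq r}$.
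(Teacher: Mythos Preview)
Your overall strategy---reduce to $J=\gin(I)$, analyze its monomial generators using the Lefschetz element $x_{n-d}$, and transfer Cohen--Macaulayness back via Lemma~\ref{gin}(i)---matches the paper's. The gap is in your opening step: the assertion that $\gin(I_{\leq r})=J_{\leq r}$ because ``taking the initial ideal commutes with the truncation $(-)_{\leq r}$'' is false. For any term order one has only the inclusion $\init(I)_{\leq r}\subseteq \init(I_{\leq r})$; the two ideals agree as vector spaces in degrees $\leq r$, but $\init(I_{\leq r})$ can acquire new minimal generators in degree $r+1$ from $S$-pair reductions. (Concretely, take $I=(x^2-yz,\,xy)\subset\kk[x,y,z]$ with revlex: then $I_{\leq 2}=I$, yet $\init(I)=(x^2,xy,y^2z)$ has a degree-$3$ generator, so $\init(I_{\leq 2})\ne\init(I)_{\leq 2}$.) The desired identity $\gin(I_{\leq r})=J_{\leq r}$ is equivalent to $\gin(I_{\leq r})$ being generated in degree $\leq r$, which is not automatic.

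The paper does establish this identity, but only \emph{after} the structural analysis of $J$, and precisely by the Crystallization Principle. The correct order is: first show that $J=\gin(I)$ has no minimal generators of degree $r+1$---none are divisible by $x_{n-d}$ since $\times x_{n-d}$ is injective in degree $r$, and none lie in $\kk[x_1,\dots,x_{n-d-1}]$ since all degree-$r$ monomials there already belong to $J$ by surjectivity in degree $r-1$. Because $\gin(I_{\leq r})$ agrees with $J$ in degrees $\leq r$ and sits inside $J$, it too has no minimal generator in degree $r+1$; now Crystallization, applied to the ideal $I_{\leq r}$ (generated in degree $\leq r$), forces $\gin(I_{\leq r})$ to be generated in degree $\leq r$, hence equal to $J_{\leq r}$. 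Your final paragraph invokes Crystallization but assigns it the wrong role: it is not needed for the ``first structural claim'' about generators of $\bar J$ divisible by $x_{n-d}$ (that follows directly from injectivity of $\times x_{n-d}$), but rather to justify the very identification $\gin(I_{\leq r})=J_{\leq r}$ that you took for granted at the outset.
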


\begin{proof}
Let $J= \gin(I)$ and $d=\dim S/I$.
We first claim that $S/J_{\leq r}$ is Cohen--Macaulay.
Observe that $J$ is a monomial ideal.
By Lemma \ref{gin}, $S/J$ is Cohen--Macaulay of dimension $d$,
and $J$ has no minimal generators which are divisible by one of $x_n,\dots,x_{n-d+1}$.
Also, since $ h_{r-1}=h_r=h_{r+1}$, the WLP shows that the multiplication
\begin{align}
\label{wlp}
\times x_{n-d} : S/\big(J+(x_n,\dots,x_{n-d+1})\big)_j \to
S/\big(J+(x_n,\dots,x_{n-d+1})\big)_{j+1}
\end{align}
is injective for $ j \leq r$,
which implies that $J$ has no minimal generators of degree $\leq r+1$ which are divisible by $x_{n-d}$.
Indeed, if there is a minimal generator of the form $ux_{n-d}$, then $u$ is in the kernel of the map \eqref{wlp}.
Thus $J_{\leq r}$ has no minimal generators which are divisible by
one of $x_n,\dots,x_{n-d}$.
Thus $x_n,\dots,x_{n-d}$ is a regular sequence of $S/J_{\leq r}$.
In particular, we have $\dim S/J_{\leq r} \geq d+1$
since the length of a regular sequence is bounded by the dimension (\cite[Proposition 1.2.12]{Bruns-Herzog}).

It is left to show that the quotient by this regular sequence is a finite dimensional vector space over $\kk$.
Since
the multiplication map \eqref{wlp}
is surjective when $j=r-1$, $(S/J+(x_n,\dots,x_{n-d}))_{r}=0$
and $J$ contains all monomials in $\kk[x_1,\dots,x_{n-d-1}]$ of degree $r$.
Thus $\dim_\kk S/(J_{\leq r}+(x_n,\dots,x_{n-d})) < \infty$, and $S/J_{\leq r}$ is Cohen--Macaulay of dimension $d+1$ with an l.s.o.p.\ $x_n,\dots, x_{n-d}$.

Next, we prove $\gin(I_{\leq r})=\gin(I)_{\leq r}$.
By the Crystallization principle, what we must prove is that $\gin(I_{\leq r})$ has no minimal generators of degree $r+1$.
Since $I_{\leq r} \subset I$ and $(I_{\leq r})_{r}=I_{r}$,
it is enough to prove that $\gin(I)$ has no minimal generators of degree $r+1$.
Indeed, we already showed that $J=\gin(I)$ has no minimal generator of degree $r+1$ which
is divisible by one of $x_n,\dots,x_{n-d+1},x_{n-d}$.
We also showed that $J$ contains all monomials in $\kk[x_1,\dots,x_{n-d-1}]$
of degree $r$.
These facts guarantee that $J=\gin(I)$ has no minimal generators of degree $r+1$, as desired.

We proved that $S/\gin(I_{\leq r})=S/\gin(I)_{\leq r}$ is Cohen--Macaulay of dimension $d+1$.
Then the desired statement follows from Lemma \ref{gin}(i).
\end{proof}

\begin{corollary}\label{cor:CM}
Suppose $\mathrm{char}(\kk)=0$.
Let $\Delta$ be a homology $(d-1)$-sphere having the WLP over $\kk$.
If $h_{r-1}(\Delta)=h_r(\Delta)$ for some $r \leq \frac d 2$,
then $\Delta(r-1)$ is Cohen--Macaulay over $\kk$ and has dimension $d$.
\end{corollary}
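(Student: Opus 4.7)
The plan is to deduce the corollary as a direct application of Theorem \ref{thm:CM} to the ideal $I=I_\Delta$. Note that $\kk[\Delta]=S/I_\Delta$ has Krull dimension $d$ and $h$-vector $(h_0(\Delta),\ldots,h_d(\Delta))$ with $h_d=h_0=1$, so its top degree is $s=d$. By the identity $I_{\Delta(i)}=(I_\Delta)_{\leq i+1}$ recalled in Section \ref{sec:CM}, one has $S/(I_\Delta)_{\leq r}=\kk[\Delta(r-1)]$. Therefore the output of Theorem \ref{thm:CM} translates directly to $\kk[\Delta(r-1)]$ being Cohen--Macaulay of Krull dimension $d+1$, equivalently to $\Delta(r-1)$ being Cohen--Macaulay of dimension $d$, which is what the corollary asserts.

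Since Theorem \ref{thm:CM} requires the chain $h_{r-1}=h_r=h_{r+1}$ while only $h_{r-1}=h_r$ is given, the bulk of the work is to promote this single equality to the full chain. I would combine two inputs: (a) the Dehn--Sommerville relations $h_i(\Delta)=h_{d-i}(\Delta)$, which hold for every homology $(d-1)$-sphere, and (b) the standard consequence of the WLP that is already used in the unimodality discussion of Section \ref{sec:CM}: if $w$ is a Lefschetz element of the Artinian reduction $R$ and $\dim_\kk R_{r-1}=\dim_\kk R_r$, then $\times w\colon R_{r-1}\to R_r$ is bijective; surjectivity at degree $r-1$ then propagates upward because $S$ is generated in degree $1$, giving $h_{r-1}\geq h_r\geq h_{r+1}\geq \cdots \geq h_d$.

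Applied at index $r$, this chain specializes to $h_r\geq h_{r+1}\geq \cdots \geq h_{d-r}$, whose endpoints coincide by Dehn--Sommerville $h_r=h_{d-r}$ (using $r\leq d/2$ to ensure $d-r\geq r$). Hence every inequality is an equality, and in particular $h_r=h_{r+1}$. The range condition $1\leq r\leq s-1=d-1$ demanded by Theorem \ref{thm:CM} is automatic from $1\leq r\leq d/2$ as soon as $d\geq 2$, the case $d=1$ being vacuous. With all hypotheses verified, Theorem \ref{thm:CM} applies and delivers the corollary.

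I do not foresee a genuine obstacle: the argument is essentially a bookkeeping step transporting Theorem \ref{thm:CM} into the Stanley--Reisner setting for homology spheres. The only step with any real content is the unimodality/symmetry observation used to promote $h_{r-1}=h_r$ to $h_r=h_{r+1}$, and this is routine once Dehn--Sommerville and the WLP are in hand.
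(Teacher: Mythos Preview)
Your proposal is correct and follows essentially the same approach as the paper's proof: both use the symmetry of the $h$-vector (Dehn--Sommerville) together with the unimodality coming from the WLP to upgrade $h_{r-1}=h_r$ to $h_{r-1}=h_r=\cdots=h_{d-r+1}$ (in particular $h_r=h_{r+1}$), and then invoke Theorem~\ref{thm:CM} via the identity $I_{\Delta(r-1)}=(I_\Delta)_{\leq r}$. Your write-up merely spells out a few details (the value of $s$, the range check on $r$) that the paper leaves implicit.
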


\begin{proof}
Recall that
the $h$-vector of $\Delta$ coincides with the $h$-vector
of its Stanley--Reisner ring $\kk[\Delta]$.
Since the $h$-vector of $\Delta$ is symmetric,
the WLP shows $h_{r-1}(\Delta)=h_r(\Delta)=\dots=h_{d-r+1}(\Delta)$.
Since $I_{\Delta(r-1)}=(I_\Delta) _{\leq r}$,
Theorem \ref{thm:CM}
says that $\kk[\Delta(r-1)]$ is Cohen-Macaulay of dimension $d+1$.
Thus $\Delta(r-1)$ is Cohen--Macaulay of dimension $d$.
\end{proof}

\begin{remark}\label{rem:vanKampen}
The weaker assertion that $\dim(\Delta(d-r))\leq d$ for $r\leq \frac{d}{2}$ is true for any simplicial $(d-1)$-sphere $\Delta$, and more generally for any simplicial complex $\Delta$ which embeds in the $(d-1)$-sphere.

This can be shown using van-Kampen obstruction to embedability, see \cite{VanKampen, Shapiro, Wu}, for cones over Flores complexes \cite{Flores}.
If we assume $\dim(\Delta(d-r))>d$ then, for $d$ even, $\Delta$ contains
$\skel_{d/2}(2^{[d+2]})$, hence it contains the cone over Flores complex $L=\skel_{\frac{d}{2}-1}(2^{[d+1]})$. (Here $[i]:=\{1,2,\dots,i\}$.)
By the non-vanishing on $L$ of the van-Kampen obstruction to embedability in the $(d-2)$-sphere, we conclude that the cone over $L$ does not embed in the $(d-1)$-sphere, a contradiction.
The argument for $d$ odd is similar.
\end{remark}
%%%%%%%%%%%%%%%%%%%%%%%%%%%%%%%%%%%%%%%%%%%%%%
\section{GLBC for polytopes}\label{sec:polytopes}
In this Section we prove the existence part of Theorem \ref{thmIntro:GLBCpolytopes}.

\begin{theorem}\label{thm:GLBCpolytopes}
Let $P$ be a simplicial $d$-polytope with the $h$-vector $(h_0,h_1,\dots,h_d)$,
$\Delta$ its boundary complex, and $1\leq r\leq \frac{d}{2}$ an integer.
If $h_{r-1}=h_{r}$ then $\Delta(d-r)$ is a geometric triangulation of $P$.
\end{theorem}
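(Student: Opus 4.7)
Set $K:=\Delta(d-r)$. By Lemma~\ref{lem:NoMissingFaces} one has $K=\Delta(r-1)$, by Corollary~\ref{cor:CM} the complex $K$ is Cohen--Macaulay of dimension $d$, and $\Delta\subseteq K$ with the two complexes having identical $(d-r)$-skeleton. Moreover the vertex set of $K$ coincides with the vertex set of $P$. The plan is to build the piecewise-affine map $\phi\colon |K|\to P\subseteq\RR^d$ extending the identification of vertices of $K$ with vertices of $P$, and to show that $\phi$ is a PL-homeomorphism. Since $P$ is convex and all vertices of $K$ lie in $P$, the affine extension on each closed simplex is well defined and lies in $P$; in particular $\phi(|K|)\subseteq P$, and $\phi|_{|\Delta|}$ is the identity homeomorphism $|\Delta|\to\partial P$. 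The substance of the theorem is that $\phi$ is injective and surjective, equivalently that each facet of $K$ maps affinely onto a full-dimensional $d$-simplex in $P$ and that distinct facet images have disjoint interiors.

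I would split the argument into two stages. In the first stage, I would use the Cohen--Macaulay property of $K$ together with the matching of $(d-r)$-skeleta to show that $|K|$ is a homology $d$-ball whose boundary is $|\Delta|$. Reisner's criterion applied to $\lk_K(F)$ is the main tool: for small $F$ the link already sits inside the matched $(d-r)$-skeleton and so coincides with $\lk_\Delta(F)$, which is either a sphere or a ball of the right dimension, while for larger $F$ the Cohen--Macaulay condition kills intermediate homology and one has to rule out spurious top homology unless $F\in\Delta$. Combining these link computations with the long exact sequence of the pair $(|K|,|\Delta|)$ and the sphere homology of $|\Delta|$ yields $H_d(|K|,|\Delta|;\QQ)\cong \QQ$ and identifies $\partial K$ combinatorially with $\Delta$.

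In the second stage, with the homology-ball conclusion in hand, $\phi$ becomes a map of pairs $(|K|,|\Delta|)\to (P,\partial P)$ that is a homeomorphism on the boundary; hence $\phi_*$ sends the fundamental class of $|K|$ to $\pm[P]\in H_d(P,\partial P;\QQ)$ and $\phi$ has degree $\pm 1$. Because $\phi$ is piecewise-affine, the total signed $d$-volume contributed by the facets of $K$ equals $\pm\,\mathrm{vol}(P)$; since every simplex image sits inside the convex body $P$, the signed contributions cannot cancel, so each facet of $K$ must be mapped to a non-degenerate $d$-simplex in $P$ and distinct facets must have disjoint interior images. This forces $\phi$ to be a PL-homeomorphism $|K|\to P$, exhibiting $K$ as the desired geometric triangulation.

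The main obstacle I expect is the first stage, upgrading ``$K$ is Cohen--Macaulay of dimension $d$ with $\Delta\subseteq K$'' to ``$|K|$ is a homology $d$-ball with boundary $|\Delta|$''; a priori $K$ could carry interior $(d-1)$-faces not in $\Delta$, and ruling them out combines the combinatorial control on missing faces (all of cardinality $\le r$, by $K=\Delta(r-1)$) with the link-wise application of Reisner's criterion. A secondary, geometric concern is non-degeneracy of the affine map on each facet of $K$ under the given coordinates of $P$; if this fails, one can replace $P$ with a small generic perturbation of its vertices, which preserves the combinatorial type of $\Delta$ and thus the hypothesis $h_{r-1}=h_r$, prove the theorem in that generic case, and then return to the original $P$ by a standard continuity argument.
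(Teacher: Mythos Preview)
Your overall strategy differs from the paper's and carries two substantive gaps.

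\textbf{Stage 1 is the real problem.} Upgrading ``$K$ is Cohen--Macaulay of dimension $d$ with $\skel_{d-r}K=\skel_{d-r}\Delta$'' to ``$K$ is a homology $d$-ball with $\partial K=\Delta$'' is precisely the content of the paper's Theorem~\ref{thm:GLBCLefschetzSpheres}, and the paper proves it with canonical-module machinery (identifying $I_\Delta/I_{\Delta'}$ with $\omega_{\kk[\Delta']}$ via an $\Ext$ computation and then reading off the top link homology from Hochster's formula). Your sketch does not get there. In particular, the claim that ``for small $F$ the link already sits inside the matched $(d-r)$-skeleton and so coincides with $\lk_\Delta(F)$'' is false: for every $F$ one has $\dim\lk_K(F)=d-\#F$ while $\dim\lk_\Delta(F)=d-1-\#F$, so these links never coincide. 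Reisner's criterion only kills homology below the top degree of each link; it gives no control on $\widetilde H_{d-\#F}(\lk_K(F))$, which is exactly what you must compute to decide whether $F$ is interior or boundary. You correctly identify this as the main obstacle, but the outline you give does not overcome it.

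\textbf{Stage 2 is also incomplete.} Even granting that $K$ is a homology $d$-ball, the volume argument does not force injectivity. Degree $\pm1$ yields $\sum_i \epsilon_i\,\mathrm{vol}\,\phi(F_i)=\pm\,\mathrm{vol}(P)$ with $\epsilon_i\in\{\pm1\}$, and surjectivity gives $\sum_i \mathrm{vol}\,\phi(F_i)\ge \mathrm{vol}(P)$, but these two inequalities do not exclude a fold (some facets mapping with opposite orientation and overlapping inside $P$); convexity of $P$ alone does not prevent such cancellation. Your perturbation fix for non-degeneracy is also fragile: a limit of PL-homeomorphisms need not be injective.

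\textbf{What the paper does instead.} For polytopes the paper bypasses Stage~1 entirely. It first proves injectivity of the affine map by a direct convex-geometry argument (Lemma~\ref{lem:D'embedded}): a minimal counterexample $F_1,F_2$ to $[F_1]\cap[F_2]=[F_1\cap F_2]$ is shown, via Radon's theorem and the fact that $\Delta'=\Delta(r-1)=\Delta(d-r)$, to force $\#F_1,\#F_2\ge d-r+2$ while $\#F_1+\#F_2\le d+2$, a contradiction. Then surjectivity (Lemma~\ref{lem:D'equalsP}) follows from Alexander duality in $S^d$ together with the single consequence of Cohen--Macaulayness actually needed, namely $\widetilde H_{d-1}(\Delta';\QQ)=0$. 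No homology-ball statement, no degree/volume bookkeeping.
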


In the rest of this section,
we fix a simplicial $d$-polytope $P$ satisfying the assumption of Theorem \ref{thm:GLBCpolytopes},
and prove the theorem for $P$.

We may assume $P \subset \RR^d$.
Let $V=\{v_1,v_2,\dots,v_n\} \subset \RR^d$ be the vertex set of $P$
and let $\Delta$ be the boundary complex of $P$.
For a subset $T =\{v_{i_1},\dots,v_{i_k}\} \subset V$,
we write $[T]=\mathrm{conv}(v_{i_1},\dots,v_{i_k})$ for the convex hull of the vertices in $T$.
Let $\Delta'=\Delta(r-1)$.

\begin{lemma}\label{lem:D'embedded}
The set $\{[F]: F \in \Delta'\}$ is a geometric realization of $\Delta'$,
namely,
\begin{itemize}
\item[(i)] $[F_1]\cap[F_2]=[F_1\cap F_2]$ for all $F_1,F_2 \in \Delta'$, and
\item[(ii)] $\dim [F]=\# F -1$ for all $F \in \Delta'$.
\end{itemize}
\end{lemma}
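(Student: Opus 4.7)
My plan is to first establish part (i) and then deduce part (ii), and to prove part (i) by first handling the disjoint case $F_1 \cap F_2 = \emptyset$ and then reducing the general case to it.

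For deducing (ii) from (i): if some $F \in \Delta'$ has affinely dependent vertices, split a nontrivial affine dependence on $F$ into its positive-coefficient set $F^+$ and negative-coefficient set $F^-$. These are disjoint subsets of $F$, hence lie in $\Delta'$, and their convex hulls share the Radon point; part (i) would then give $[F^+] \cap [F^-] = [F^+ \cap F^-] = \emptyset$, a contradiction. For the reduction of general (i) to the disjoint case: given $p \in [F_1] \cap [F_2] \setminus [F_1 \cap F_2]$, subtracting two convex representations of $p$ yields an affine dependence $\alpha$ on $F_1 \cup F_2$ whose positive support $A$ lies in $F_1$ and whose negative support $B$ lies in $F_2$, with $A \cap B = \emptyset$ (on $F_1 \cap F_2$ the coefficient $\alpha_v = \lambda_v - \mu_v$ is single-signed). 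The assumption $p \notin [F_1 \cap F_2]$ forces $\alpha$ nontrivial, so both $A$ and $B$ are non-empty, and the Radon point of $\alpha$ lies in $[A] \cap [B]$, contradicting the disjoint case of (i).

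The core of the argument is therefore the disjoint case. Assume disjoint $F_1, F_2 \in \Delta'$ with $[F_1] \cap [F_2] \neq \emptyset$, chosen with $\#F_1 + \#F_2$ minimal, and pick $p$ in the intersection. Minimality forces $p \in \mathrm{relint}[F_1] \cap \mathrm{relint}[F_2]$, since otherwise one could shrink some $F_i$ to the support of a convex representation of $p$. A Radon-style reduction then shows $\#F_1 + \#F_2 \leq d + 2$: if the affine-dependence space on $F_1 \cup F_2$ had dimension $\geq 2$, perturbing the natural sign-structured dependence by an independent one until the first coefficient vanishes would produce a strictly smaller sign-structured pair, contradicting minimality. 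Assuming $\#F_1 \geq \#F_2$, there are two cases. If $\#F_1 \leq d - r + 1$, then by Lemma \ref{lem:NoMissingFaces} both $F_1$ and $F_2$ lie in $\Delta$, and being disjoint faces of the simplicial polytope $P$, $[F_1] \cap [F_2] = \emptyset$, a contradiction. Otherwise $\#F_1 \geq d - r + 2$, forcing $\#F_2 \leq r$ and hence $F_2 \in \Delta$; then $p \in [F_2] \subseteq \partial P$ while $p \in \mathrm{relint}[F_1]$, so a supporting hyperplane of $P$ at $p$ must contain all of $[F_1]$, forcing $[F_1]$ into a proper face of $P$ and thus $F_1 \in \Delta$. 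Again $F_1, F_2$ are disjoint faces of $P$, contradiction.

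I anticipate the main obstacle to be the minimality-plus-Radon reduction giving $\#F_1 + \#F_2 \leq d + 2$; the compatibility between this bound and the hypothesis $r \leq d/2$ via $2(d - r + 1) \geq d + 2$ is exactly what lets the two sub-cases exhaust all possibilities, with the boundary case $r = d/2$ handled tightly.
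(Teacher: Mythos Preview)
Your proof is correct and follows the same overall strategy as the paper (minimal counterexample, Radon-type reductions, and the equality $\skel_{d-r}(\Delta')=\skel_{d-r}(\Delta)$ from Lemma~\ref{lem:NoMissingFaces}), but the tactics differ in two places worth noting.

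For (i), the paper takes a single minimal counterexample $(F_1,F_2)$ and proves disjointness as an intermediate step; you instead reduce the general case to the disjoint one first and then minimize. More substantively, the paper's endgame avoids your case split: it observes at the outset that $[F_1]\cap[F_2]$ cannot lie in $\partial P$ (otherwise it sits in a proper face and (i) would already hold), so \emph{neither} $F_i$ lies in $\Delta$, forcing $\#F_1,\#F_2\geq d-r+2$ and contradicting $\#F_1+\#F_2\leq d+2$ directly. Your supporting-hyperplane argument in Case~2 reaches the same contradiction but with an extra step. The paper also derives the bound $\#F_1+\#F_2\leq d+2$ by showing the intersection is a single point equal to the intersection of affine hulls; your ``affine-dependence space has dimension $1$'' argument is the dual formulation and equally valid.

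For (ii), your argument is actually cleaner than the paper's: the paper invokes Theorem~\ref{thm:CM} (Cohen--Macaulayness of $\Delta'$) to know $\Delta'$ is pure of dimension $d$ and then applies Radon only to facets, whereas your Radon argument applies to every face directly from (i) and needs no Cohen--Macaulay input. This makes your proof of the lemma self-contained, while the paper's is not.
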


\begin{proof}
The proof is similar to that of  \cite[Proposition 3.4]{Bagchi-Datta:StellatedSpheres}.

(i)
Assume by contradiction that $F_1,F_2 \in \Delta'$ form a counterexample to (i) with the size $\# F_1 + \# F_2$ minimal.
Note that the convex set $[F_1]\cap [F_2]$ is not contained in the boundary of $P$, as otherwise it would equal a single face $[F]$ with $F\in \Delta$ and thus $F_1\cap F_2=F$, which says that (i) holds for $F_1$ and $F_2$.
In particular,
we have $F_1 \not \in \Delta$ and $F_2 \not \in \Delta$.
We prove the following properties for $F_1$ and $F_2$.
\begin{itemize}
\item[(a)] Any $p \in [F_1] \cap [F_2] \setminus [F_1 \cap F_2]$ is in the relative interior of both $[F_1]$ and $[F_2]$.
\item[(b)] $F_1\cap F_2=\emptyset.$
\item[(c)] $[F_1]$ and $[F_2]$ intersect in a single point.
\end{itemize}

We first prove (a).
Suppose to the contrary that $p$ is in the boundary of $[F_1]$.
Then there is an $u \in F_1$ such that $p \in [F_1 -\{u\}]$.
Since $p \not \in [F_1 \cap F_2]$,
we have $p \in [F_1 -\{u\}] \cap [F_2] \setminus [(F_1- \{u\}) \cap F_2]$,
contradicting the minimality of $F_1$ and $F_2$.
Hence (a) holds.

Next we show (b).
Let $p \in [F_1] \cap [F_2] \setminus [F_1 \cap F_2]$.
By (a), there are convex combinations with positive coefficients
$\sum_{v\in F_1}a_v v=p=\sum_{v\in F_2}b_v v$ with $\# F_1 \geq 2$ and $\#F_2\geq 2$.
If there is $u\in F_1\cap F_2$, say with $a_u\leq b_u$, then by subtracting $a_u u$ from both sides and by normalizing them
we get a point $q$ which is contained in $[F_1-\{u\}] \cap [F_2]$.
Since $q$ is in the relative interior of $[F_1-\{u\}]$ by the construction,
we have $q \notin [(F_1- \{u\}) \cap F_2]$, contradicting the minimality.
Hence (b) holds.

We finally prove (c).
Suppose to the contrary that $[F_1] \cap [F_2]$ contains two different points $p$ and $q$.
Let $\ell$ be the line through them.
Then the endpoints of the line segment $\ell \cap [F_1]\cap[F_2]$ must be on the boundary of either $[F_1]$ or $[F_2]$,
contradicting (a) as $[F_1 \cap F_2]$ is empty by (b).
Hence (c) holds.

We now complete the proof of (i).
By (a) and (c), the intersection of $[F_1]$ and $[F_2]$ equals the intersection of their affine hulls,
as otherwise the neighborhood of $p$ in $[F_1]\cap[F_2]$ is not a single point.
This fact and (b) say $\# F_1+ \#F_2 \leq d+2$.
However,
since $F_1$ and $F_2$ are not in $\Delta$
and since $\Delta'=\Delta(d-r)$ and $\Delta$
have the same $(d-r)$-skeleton,
we have $\#F_1 \geq d-r+2$ and $\#F_2 \geq d-r+2$, a contradiction.
Hence we conclude that (i) holds.

(ii)
Lemma \ref{lem:NoMissingFaces} and Theorem \ref{thm:CM} show that $\Delta'$ is $d$-dimensional and pure, namely all of its facets have cardinality $d+1$.
Thus it is enough to show that if $F=\{v_{i_1},\dots,v_{i_{d+1}}\}$ is a facet of $\Delta'$ then $\dim [F]=d$. Suppose to the contrary that $\dim [F]<d$.
Then $v_{i_1},\dots,v_{i_{d+1}}$ are in the same hyperplane in $\RR^d$.
Thus by Radon's theorem there is a partition $F=F'\cup F''$ such that $[F']\cap [F'']\neq \emptyset$. This contradicts (i).
\end{proof}

Let $[\Delta']=\cup_{F \in \Delta'} [F]$ be the underlying space of the geometric simplicial complex $\{[F]:F \in \Delta'\}$.
To complete the proof of Theorem \ref{thm:GLBCpolytopes}, it is left to show
\begin{lemma}\label{lem:D'equalsP}
$[\Delta']=P$.
\end{lemma}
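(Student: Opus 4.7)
The plan is to use the Cohen--Macaulay property of $\Delta'$ established in Corollary~\ref{cor:CM} to produce a rational simplicial $d$-chain in $\Delta'$ whose boundary is the fundamental class of $\Delta=\partial P$, and then to convert the winding number of $\partial P$ around interior points of $P$ into a local-degree computation forcing every interior point of $P$ to lie in $[\Delta']$.

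First I would record that $[\Delta']\subseteq P$ by convexity (each $[F]$ is a convex combination of vertices of $P$) and that $\partial P=[\Delta]\subseteq [\Delta']$ since $\Delta\subseteq \Delta'$; also $[\Delta']$ is compact, hence closed in $\RR^d$. Next, since the boundary complex of any simplicial polytope has the WLP over $\QQ$, Corollary~\ref{cor:CM} yields that $\Delta'$ is Cohen--Macaulay of dimension $d$ over $\QQ$; in particular $\Delta'$ is pure (every facet has cardinality $d+1$) and $\widetilde H_{d-1}(\Delta';\QQ)=0$. Because $\Delta$ is a homology $(d-1)$-sphere, its fundamental class generates $\widetilde H_{d-1}(\Delta;\QQ)\cong \QQ$ and is killed by the inclusion-induced map to $\widetilde H_{d-1}(\Delta';\QQ)$, so the connecting homomorphism of the pair $(\Delta',\Delta)$ produces a rational simplicial $d$-chain $c=\sum_F \lambda_F [F]$ on oriented facets of $\Delta'$ with $\partial c=[\Delta]$.

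For the topological step, choose a generic $p\in\mathrm{int}(P)$ lying in at most one relatively open geometric facet $\mathrm{int}[F]$ of $\Delta'$, as afforded by Lemma~\ref{lem:D'embedded}(i). Viewing $c$ as a singular $d$-chain in $\RR^d$ via the geometric realization, its local class in $H_d(\RR^d,\RR^d\setminus\{p\};\QQ)$ corresponds under the boundary isomorphism to the class of $[\partial P]$ in $\widetilde H_{d-1}(\RR^d\setminus\{p\};\QQ)\cong\QQ$, whose winding number around $p$ is $\pm 1$. On the other hand this same local class is computed directly to equal $\lambda_F\,\epsilon_F$ (with $\epsilon_F=\pm 1$ an orientation sign) when $p\in\mathrm{int}[F]$ for the unique facet $F$, and to equal $0$ when $p\notin[\Delta']$. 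The second alternative contradicts $\pm 1\ne 0$, so every generic $p\in\mathrm{int}(P)$ lies in $[\Delta']$; by density of generic points and closedness of $[\Delta']$, $\mathrm{int}(P)\subseteq [\Delta']$, and combined with the first paragraph, $P=[\Delta']$.

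The main obstacle is precisely this last degree identity --- aligning orientations so that the algebraic coefficients of $c$ appearing in the simplicial chain complex of $\Delta'$ agree with the topological local degree at a point $p\in\RR^d$. This is routine from the long exact sequence of $(\RR^d,\RR^d\setminus\{p\})$ together with naturality and a standard local excision, but it is the spot where the simplicial and geometric pictures must be reconciled, and it is what allows the abstract Cohen--Macaulayness of $\Delta'$ to be cashed in as a genuine covering of $P$.
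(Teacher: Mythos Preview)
Your argument is correct, but the paper's route is noticeably shorter. Both proofs hinge on the same algebraic input---Corollary~\ref{cor:CM} gives $\widetilde H_{d-1}(\Delta';\QQ)=0$---but the paper cashes this in via a single application of Alexander duality: embed $[\Delta']\subseteq P$ in $S^d$; if some $p\in P\setminus[\Delta']$ existed, then $p$ and any $q\notin P$ would lie in different components of $S^d\setminus[\Delta']$ (any path between them must cross $\partial P\subseteq[\Delta']$), so $\widetilde H_0(S^d\setminus[\Delta'];\QQ)\ne 0$, and Alexander duality forces $\widetilde H_{d-1}([\Delta'];\QQ)\ne 0$, contradicting Reisner's criterion. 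That is the entire proof.

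Your chain-level argument reaches the same contradiction by a more explicit path: you exhibit a rational $d$-chain $c$ in $\Delta'$ with $\partial c=[\Delta]$ and then compute the local class of $c$ at an interior point $p$ of $P$ two ways. The bookkeeping you flag (aligning simplicial and singular orientations, excision near $p$) is indeed routine, and for the lemma itself you only need the easy half---when $p\notin[\Delta']$ the relative class of $c$ in $H_d(\RR^d,\RR^d\setminus\{p\})$ vanishes because $c$ already lives in $C_d(\RR^d\setminus\{p\})$, while the connecting map sends it to the nonzero winding class of $\partial P$. What your approach buys beyond the paper's is the extra information that each $\lambda_F=\pm 1$, i.e.\ that $c$ is essentially a relative fundamental class for $(\Delta',\Delta)$; this is not needed here but foreshadows the homology-ball conclusion proved later in Section~\ref{sec:LefschetzSpheres}. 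The paper's Alexander-duality proof, by contrast, avoids any chain-level computation and needs no genericity argument.
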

\begin{proof}
Observe that $[\Delta']\subseteq P$.
Assume by contradiction that there is a point $p\in P-[\Delta']$.
We assume that $[\Delta']$ and $P$ are embedded in $S^d$
via the natural homeomorphism $\RR^d \cong S^d - v \subset S^d$,
where $v$ is a point in $S^d$.
% Let $S$ be the union of $P$ with a (topological) cone $v*[\Delta]$ where $v\notin P$, so $S$ is a $d$-sphere.
Let $q \in \RR^d - P$.
Since $[\Delta']$ contains the boundary of $P$,
$p$ and $q$ are in different connected components in $S^d-[\Delta']$.
Thus $S^d-[\Delta']$ is not connected.
By Alexander duality,
we have $\widetilde H_{d-1}([\Delta'];\QQ) \cong \widetilde H_0(S^d-[\Delta']; \QQ) \ne 0$.

Recall that $\Delta$ has the WLP over $\QQ$.
Thus $\Delta'$ is Cohen--Macaulay over $\QQ$ of dimension $d$ by Corollary \ref{cor:CM}. By Lemma \ref{lem:D'embedded} $[\Delta']$ is the underlying space of a geometric realization of $\Delta'$.
Thus Reisner's criterion (Lemma \ref{lem:Reisner}) says $\widetilde H_{d-1}([\Delta'];\QQ)=0$, a contradiction.
\end{proof}
%%%%%%%%%%%%%%%%%%%%%%%%%%%%%%%%%%%%%%%

\section{GLBC for Lefschetz spheres}\label{sec:LefschetzSpheres}
In this section we prove the existence part in Theorem \ref{thmIntro:GLBCspheres}.
The proof is algebraic
and we assume familiarity with $\ZZ^n$-graded commutative algebra theory.
See e.g.\ \cite{MillerSturmfels} for the basics of this theory.

First, we set some notation.
Let $\ee_i \in \ZZ^n$ be the $i$th unit vector of $\ZZ^n$.
We consider the $\ZZ^n$-grading of $S=\kk[x_1,...,x_n]$ defined by $\deg x_i = \ee_i$.
For a $\ZZ^n$-graded $S$-module $M$ and for $\aaa=(a_1,\dots,a_n) \in \ZZ^n$,
we denote by $M_\aaa$ the graded component of $M$ of degree $\aaa \in \ZZ^n$.
Let $\mideal=(x_1,\dots,x_n)$ be the graded maximal ideal of $S$.
We regard $\kk$ as a graded $S$-module by identification $\kk=S/\mideal$.
We recall a few known properties on $\Tor_i^S(\kk,-)$.

\begin{lemma}
\label{koszul}
Let $C$ be a graded $S$-module.
If $C_k =0$ for all $k \leq r$ then one has
$\Tor_{i}(\kk,C)_{i+j}= 0$ for all $i$ and $j \leq r$.
\end{lemma}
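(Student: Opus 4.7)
The plan is to compute $\Tor^S_i(\kk, C)$ using the Koszul resolution of $\kk$ and then observe that the internal grading of its terms, combined with the hypothesis on the support of $C$, forces the desired vanishing by a straightforward degree count.

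Concretely, I would start by letting $K_\bullet$ denote the Koszul complex on $x_1,\dots,x_n$, which is the minimal $\ZZ$-graded free resolution of $\kk = S/\mideal$ over $S$. Its $i$-th term $K_i = \bigwedge^i S^n$ is a free $S$-module of rank $\binom{n}{i}$ generated in internal degree $i$ (the standard basis element indexed by $\{j_1<\dots<j_i\}$ sits in degree $i$, or, in the $\ZZ^n$-grading, in multidegree $\ee_{j_1}+\cdots+\ee_{j_i}$). By definition,
$$\Tor_i^S(\kk, C) \;=\; H_i(K_\bullet \otimes_S C),$$
so $\Tor_i^S(\kk, C)_{i+j}$ is a subquotient of $(K_i \otimes_S C)_{i+j}$.

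The key step is then the graded-piece calculation. Because $K_i$ is a free module generated in degree $i$, tensoring with $C$ and extracting the degree $(i+j)$ part gives
$$(K_i \otimes_S C)_{i+j} \;\cong\; C_j^{\,\binom{n}{i}}.$$
By hypothesis $C_j = 0$ for every $j \leq r$, so $(K_i \otimes_S C)_{i+j} = 0$ for such $j$, and therefore its subquotient $\Tor_i^S(\kk, C)_{i+j}$ vanishes as well.

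There is really no main obstacle; the statement is the standard shape-of-resolution principle, saying that the generators of $C$ live in degrees $\geq r+1$ and the $i$-th syzygies contributing to $\Tor_i$ are shifted upward by at least $i$. The only thing worth flagging is that minimality of the Koszul resolution is not actually required: any graded free resolution of $\kk$ whose $i$-th term is generated in degree $\geq i$ would suffice. The Koszul complex is chosen simply because it makes the degree bookkeeping above transparent.
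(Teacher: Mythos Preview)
Your proposal is correct and is essentially the same argument as the paper's own proof: both compute $\Tor_i^S(\kk,C)$ via the Koszul complex, use that $K_i$ is generated in degree $i$, and conclude $(K_i\otimes_S C)_{i+j}=0$ for $j\leq r$ from $C_j=0$. Your write-up is slightly more explicit about the isomorphism $(K_i\otimes_S C)_{i+j}\cong C_j^{\binom{n}{i}}$, but the approach is identical.
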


\begin{proof}
Let $\mathcal K_\bullet= \mathcal K_\bullet (x_1,\dots,x_n)$ be the Koszul complex of $x_1,\dots,x_n$ (see, e.g., \cite[\S 1.6]{Bruns-Herzog}).
Since $\mathcal K_\bullet$ is the minimal free  resolution of $\kk$,
$$\Tor_i(\kk,C)_{i+j} \cong H_i(\mathcal K_\bullet \otimes C)_{i+j}.$$
On the other hand, all the elements in $\mathcal K_i$ have degree $\geq i$
and all the elements in $C$ have degree $\geq r+1$ by the assumption.
These facts imply that
$(\mathcal K_i \otimes C)_{i+j}=0$ for $j \leq r$.
Hence $H_i(\mathcal K_\bullet \otimes C)_{i+j}=0$ for all $j \leq r$.
\end{proof}

The following fact on generic initial ideals is well-known.
See \cite[Theorem 2.27]{Gr}.
%\eran{before, this Lemma was part of the Crystallization Principal. Do you use a different reference now?}
%\satoshi{I changed this since I don't want to introduce Tor in section 3.
%Note that the statement for regularity in Crystallization principal is a consequence of the lemma below.}

\begin{lemma}[Bayer--Stillman]
\label{reg}
Suppose $\mathrm{char}(\kk)=0$.
Let $I \subset S$ be a homogeneous ideal.
If $\gin(I)$ is generated by monomials of degree $\leq m$
then
$\Tor_i^S(\kk,S/I)_{i+j}=0$ for all $j \geq m$.
\end{lemma}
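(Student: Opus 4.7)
The plan is to combine two standard principles: a semicontinuity statement saying that passage to $\gin$ can only (weakly) increase graded Betti numbers, and an explicit structural result for strongly stable monomial ideals. First, I would establish the inequality
\[
\beta_{i,i+j}^S(S/I) \leq \beta_{i,i+j}^S(S/\gin(I)) \quad \text{for all } i,j,
\]
by constructing the usual one-parameter Gröbner deformation $\widetilde{I}\subset S[t]$ with general fiber isomorphic to $\varphi(I)$ and special fiber $\gin(I)$, and invoking upper semicontinuity of the dimensions of the graded pieces of $\Tor_i^S(\kk,-)$ along a flat family. Since $\varphi\in\mathrm{GL}_n(\kk)$ acts by a degree-preserving automorphism of $S$, $\beta_{i,i+j}^S(S/I)=\beta_{i,i+j}^S(S/\varphi(I))$, so this reduces the lemma to showing $\Tor_i^S(\kk,S/\gin(I))_{i+j}=0$ for $j\geq m$.

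Second, by Galligo's theorem (which is where $\mathrm{char}(\kk)=0$ enters) the ideal $J:=\gin(I)$ is Borel-fixed, and in particular strongly stable: if $x_k u\in J$ and $j<k$, then $x_j u\in J$. For such $J$ with minimal generators of degree $\leq m$, the Eliahou--Kervaire construction produces an explicit minimal graded free resolution
\[
\cdots \longrightarrow F_i \longrightarrow \cdots \longrightarrow F_1 \longrightarrow F_0 \longrightarrow S/J \longrightarrow 0,
\]
indexed by admissible symbols $(u;x_{j_1},\dots,x_{j_i})$ with $u\in G(J)$ and $j_1<\cdots<j_i<\max\{k:x_k\mid u\}$. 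The generator corresponding to such a symbol sits in homological degree $i$ and internal degree $\deg(u)+i\leq m+i$, so $F_i$ is generated in degrees $\leq m+i$. Hence $\Tor_i^S(\kk,S/J)_{i+j}\cong (F_i\otimes_S\kk)_{i+j}=0$ whenever $j\geq m$, and combined with the first step this gives the lemma.

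The principal obstacle is verifying the two inputs cleanly: the flat-family argument that yields the semicontinuity of Betti numbers under Gröbner degeneration, and the Eliahou--Kervaire bound itself. For the latter one can bypass the full resolution and proceed by induction on the number of variables, repeatedly using the short exact sequences
\[
0 \longrightarrow \bigl(S/(J:x_n)\bigr)(-1) \xrightarrow{\;\cdot\, x_n\;} S/J \longrightarrow S/(J+(x_n)) \longrightarrow 0,
\]
noting that, for strongly stable $J$ with respect to $x_1>_\rev\cdots>_\rev x_n$, both $J:x_n$ and $J+(x_n)$ remain strongly stable with generators in degree $\leq m$. The induced long exact sequence on $\Tor_*^S(\kk,-)$ then propagates the Tor-vanishing $\Tor_i^S(\kk,-)_{i+j}=0$ for $j\geq m$ across the three terms, closing the induction. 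The delicate point is book-keeping of the degree shifts so that the induction hypothesis applies to a strictly smaller polynomial ring on each step; once that is in place, the desired vanishing in the Bayer--Stillman statement follows directly.
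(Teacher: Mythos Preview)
The paper does not give its own proof of this lemma; it simply records it as ``well-known'' and points to \cite[Theorem 2.27]{Gr}. Your outline is precisely the standard argument behind that citation: upper semicontinuity of graded Betti numbers under Gr\"obner degeneration reduces to the Borel-fixed case, and then one reads off the regularity bound from the Eliahou--Kervaire resolution (or, equivalently, via the colon-ideal induction you sketch at the end). So there is no discrepancy of approach to report---you are supplying the proof the paper omits.

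One small bookkeeping point to fix. In the Eliahou--Kervaire resolution of $S/J$ (with $F_0=S$), the admissible symbol $(u;x_{j_1},\dots,x_{j_i})$ sits in homological degree $i+1$, not $i$; equivalently, for $i\geq 1$ the module $F_i$ is generated in degrees $\leq m+i-1$. From your stated bound ``$F_i$ generated in degrees $\leq m+i$'' one would only deduce $\Tor_i^S(\kk,S/J)_{i+j}=0$ for $j>m$, missing the case $j=m$. With the correct shift the vanishing for $j\geq m$ follows exactly as you intend, and the rest of the argument is fine.
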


We also recall some basic facts on canonical modules.
For a subset $F \subset [n]$,
let $\ee_F= \sum_{i \in F} \ee_i$.
For a Cohen--Macaulay $\ZZ^n$-graded ring $R=S/I$ of dimension $d$,
the module $\omega_R=\Ext_S^{n-d}(R,S(-\ee_{[n]}))$ is called the {\em canonical module} of $R$.
An important property of a canonical module is that
it is isomorphic to the Matlis dual of the local cohomology module $H_\mideal^d(R)$
by the local duality
(see \cite[Corollay 3.5.9]{Bruns-Herzog}).
Now suppose that $R=\kk[\Delta]$.
Then the local duality and the Hochster's formula for local cohomology \cite[Theorem 5.3.8]{Bruns-Herzog} imply that, for any $F \in \Delta$, one has
\begin{align}
\label{3}
\dim_\kk (\omega_{\kk[\Delta]})_{\ee_F}
= \dim_\kk (H_\mideal^d(\kk[\Delta]))_{-\ee_F}
=\dim_\kk \widetilde H_{d-1-\# F}(\mathrm{lk}_\Delta(F)).
\end{align}
Recall that by Reisner's criterion homology balls and spheres are Cohen--Macaulay.

The next result and Theorem \ref{thm:Uniqueness} prove Theorem \ref{thmIntro:GLBCspheres}.

\begin{theorem}\label{thm:GLBCLefschetzSpheres}
Suppose $\mathrm{char}(\kk)=0$.
Let $\Delta$ be a homology $(d-1)$-sphere having the WLP.
If $h_{r-1}(\Delta)=h_r(\Delta)$ for some $r \leq \frac d 2$
then $\Delta(r-1)$ is a homology $d$-ball whose boundary complex is $\Delta$.
\end{theorem}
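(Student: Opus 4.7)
The plan is to prove the theorem by identifying, via formula (\ref{3}), the graded pieces $(\omega_{\kk[\Delta']})_{\ee_F}$ with those of the $\kk[\Delta']$-submodule $A := I_\Delta\kk[\Delta'] \cong I_\Delta/I_{\Delta'}$, where $\Delta' := \Delta(r-1) = \Delta(d-r)$ (equality by Lemma \ref{lem:NoMissingFaces} and Remark \ref{rem:NoMissingFaces}). Concretely, $\dim_\kk A_{\ee_F} = 1$ if $F \in \Delta' \setminus \Delta$ and $0$ if $F \in \Delta$, so establishing $\omega_{\kk[\Delta']} \cong A$ as $\ZZ^n$-graded modules gives, via (\ref{3}) applied to the Krull-$(d{+}1)$-dimensional ring $\kk[\Delta']$,
\[ \dim_\kk \widetilde{H}_{d-\#F}(\lk_{\Delta'}(F);\kk) = \dim_\kk A_{\ee_F} \in \{0,1\}. \]
Combined with Corollary \ref{cor:CM} (Reisner's criterion kills all lower link homologies), this is exactly the condition that $\Delta'$ is a homology $d$-ball with $\partial\Delta' = \Delta$.

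For the structural setup, let $R := \kk[\Delta']$. Corollary \ref{cor:CM} gives $R$ Cohen--Macaulay of dim $d+1$. Applying the Depth Lemma to the short exact sequence $0 \to A \to R \to \kk[\Delta] \to 0$, using $\depth R = d+1$ and $\depth \kk[\Delta] = d$, yields $\depth A \geq d+1$, hence $A$ is CM of dim $d+1$. Dualize by applying $\Ext^\bullet_S(-,S(-\ee_{[n]}))$: by the codimensions of the three modules, the long exact sequence collapses to
\[ 0 \to \omega_R \to \omega_A \to \omega_{\kk[\Delta]} \to 0, \]
where $\omega_{\kk[\Delta]} \cong \kk[\Delta]$ as $\ZZ^n$-graded $R$-modules (Gorenstein duality for the homology sphere $\Delta$). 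It therefore suffices to identify $\omega_A \cong R$, which forces the displayed SES to match the original one and so gives $\omega_R \cong A$.

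To establish $\omega_A \cong R$ I would combine two inputs. First, the Bayer--Stillman bound $\reg R \leq r-1$ (from $\gin(I_{\Delta'}) = \gin(I_\Delta)_{\leq r}$ via Lemma \ref{reg} and the crystallization argument in the proof of Theorem \ref{thm:CM}) forces $\omega_R$ to be generated in degrees $\geq d-r+2$, matching the initial degree of $A$ (missing faces of $\Delta$ of size $\leq d-r+1$ vanish by Lemma \ref{lem:NoMissingFaces}). Second, the $\Tor$-vanishing of Lemma \ref{koszul} applied to $A$, whose graded components below $d-r+2$ vanish, together with Gorenstein duality $\omega_{\kk[\Delta]} \cong \kk[\Delta]$, should constrain the minimal free resolution of $\omega_A$ sufficiently to produce the desired isomorphism (possibly via an induced natural map $R \to \omega_A$ from the trace/evaluation pairing on the canonical module).

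The principal obstacle is the tautological flavor of naive approaches: for instance, comparing multi-graded Hilbert series via the formal identity $H(\omega_A,\mathbf{t}) = (-1)^{d+1} H(A,1/\mathbf{t})$, after expanding against $1/\prod_{i \in G}(1-t_i)$, reduces to the Euler-characteristic identities $\tilde{\chi}(\lk_{\Delta'}(G)) = (-1)^{d-\#G}$ for $G \in \Delta' \setminus \Delta$ and $\tilde{\chi}(\lk_{\Delta'}(G)) = 0$ for $G \in \Delta$, which by Cohen--Macaulayness of $\Delta'$ are already equivalent to the desired conclusion. The real work lies in producing a non-circular algebraic identification of $\omega_A$ with $R$, which I expect to hinge on delicate use of Lemmas \ref{koszul} and \ref{reg} together with the explicit knowledge that the minimal generators of $A$ correspond precisely to the missing faces of $\Delta$ of size $> r$, all lying in degrees at least $d-r+2$.
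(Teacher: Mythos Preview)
Your overall architecture is precisely the paper's: set $A = I_\Delta/I_{\Delta'}$ and $R = \kk[\Delta']$, show $A$ is Cohen--Macaulay of dimension $d+1$ via the depth lemma, identify $A \cong \omega_R$ (equivalently $\omega_A \cong R$), and then read off the top link homologies from formula~(\ref{3}). Your dual short exact sequence $0 \to \omega_R \to \omega_A \to \omega_{\kk[\Delta]} \to 0$ is correct and is implicit in the paper's argument.

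The genuine gap is exactly where you flag it: you have not established $\omega_A \cong R$, and your sketch (``should constrain the resolution\ldots possibly via a trace map'') does not name a mechanism. The paper closes this in two concrete steps, only the first of which you anticipated. First, the long exact $\Tor$ sequence for $0 \to A \to R \to \kk[\Delta] \to 0$, combined with $\Tor_{n-d}^S(\kk,R)=0$, with Lemma~\ref{reg} (so $\Tor_{n-d-1}^S(\kk,R)_j = 0$ for $j \geq n{-}d{-}1{+}r$), with Lemma~\ref{koszul} (so $\Tor_{n-d-1}^S(\kk,A)_j = 0$ for $j \le n{-}d{-}1{+}r$, since $A_k = 0$ for $k \leq r$), and with $\Tor_{n-d}^S(\kk,\kk[\Delta]) \cong \kk(-\ee_{[n]})$, yields $\Tor_{n-d-1}^S(\kk,A) \cong \kk(-\ee_{[n]})$. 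By duality of resolutions this says $\omega_A$ is cyclic with a single generator in degree $0$, so $\omega_A \cong S/J$ for some homogeneous ideal $J$.

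Second---and this is the ingredient entirely absent from your outline---one must show $J = I_{\Delta'}$, i.e.\ $\mathrm{ann}_S(\omega_A) = I_{\Delta'}$. Since $\mathrm{ann}_S(\omega_A) = \mathrm{ann}_S(A)$ by Cohen--Macaulay duality, the task reduces to $\mathrm{ann}_S(A) = I_{\Delta'}$. The inclusion $I_{\Delta'} \subseteq \mathrm{ann}_S(A)$ is trivial; the reverse is not, and no amount of regularity or $\Tor$-degree bookkeeping yields it. The paper proves it by hand: take $f = \sum_i x_{F_i} \in I_\Delta$, the sum over the facets $F_i$ of $\Delta'$, and check that for any monomial $g \notin I_{\Delta'}$ one has $gf \neq 0$ in $A$ (purity of $\Delta'$ places the support of $g$ inside some facet $F_i$, and then $g x_{F_i}$ survives in the summand $x_{F_i}\cdot S/(x_k : k \notin F_i) \subset A$). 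With both steps in hand, $\omega_A \cong R$, hence $A \cong \omega_{\omega_A} \cong \omega_R$, and the Hochster computation you describe finishes the proof.
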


\begin{proof}
\textit{Step 1:}
Let $\Delta'=\Delta(r-1)$ and $C=I_{\Delta}/I_{\Delta'}$.
For a graded $S$-module $M$, let $\mathrm{ann}_S(M)=\{g \in S: gf = 0 \mbox{ for all }f \in M\}.$
We first show that $C$ satisfies the following conditions:
\begin{enumerate}
\item[(i)] $\mathrm{ann}_S(C)=I_{\Delta'}$.
\item[(ii)] $C$ is Cohen--Macaulay of dimension $d+1$.
\item[(iii)] $\Tor_{n-d-1}^S (\kk,C)\cong \kk(-\ee_{[n]})$.
\end{enumerate}

(i)
$\mathrm{ann}_S(C) \supset I_{\Delta'}$ is clear.
It is enough to show that there is an element $f \in I_{\Delta}$
such that $ g f \not \in I_{\Delta'}$ for all $g \in S$ with $g \not \in I_{\Delta'}$.
Let $F_1,\dots,F_s$ be the facets of $\Delta'$.
By Corollary \ref{cor:CM} each $F_i$ is of size $d+1$.
We claim that the polynomial $f= \sum_{i=1}^s x_{F_i} \in I_\Delta$ satisfies the desired property.

To prove this,
since $C$ contains $\bigoplus_{i=1}^s x_{F_i} \cdot(S/(x_k: k \not \in F_i))$ as a submodule,
it is enough to show that, for any $g \not \in I_{\Delta'}$, $g x_{F_i} \ne 0$ in $x_{F_i} \cdot (S/(x_k: k \not \in F_i))$ for some $i$.
Moreover,
since $x_{F_i} \cdot (S/(x_k: k \not \in F_i))$ is $\ZZ^n$-graded,
we may assume that $g=x_{i_1}^{a_1} \cdots x_{i_t}^{a_t}$, where $a_1,\dots,a_t$ are not zero.
Since $x_{i_1}^{a_1} \cdots x_{i_t}^{a_t} \not \in I_{\Delta'}$, we have $\{i_1,\dots,i_t\} \in \Delta'$.
Then since $\Delta'$ is Cohen--Macaulay, $\Delta'$ is pure and there is a facet $F_i$ which contains $\{i_1,\dots,i_t\}$.
Then we have $x_{i_1}^{a_1} \cdots x_{i_t}^{a_t}x_{F_i} \ne 0$ in $x_{F_i}\cdot (S/(x_k: k \not \in F_i))$ as desired.

(ii)
Consider the
short exact sequence
\begin{align}
\label{2}
0 \longrightarrow
C\longrightarrow
S/I_{\Delta'}
\longrightarrow
S/I_{\Delta}
\longrightarrow
0.
\end{align}
Since $S/I_{\Delta'}$ is Cohen--Macaulay of dimension $d+1$
and since $S/I_{\Delta}$ is Cohen--Macaulay of dimension $d$,
we conclude that $C$ is Cohen--Macaulay of dimension $d+1$ (e.g.\ use the depth lemma \cite[Proposition 1.2.9]{Bruns-Herzog}).

(iii)
It remains to prove
$\Tor_{n-d-1}^S(\kk,C)  \cong \kk(-\ee_{[n]})$.
Note that $\Tor_{n-d}^S(\kk,S/I_{\Delta'})=0$
since $S/I_{\Delta'}$ is Cohen--Macaulay of dimension $d+1$.
Then the short exact sequence \eqref{2} induces the exact sequence
$$
0
\longrightarrow
 \Tor_{n-d}^S(\kk,S/I_{\Delta})_j
\longrightarrow
 \Tor_{n-d-1}^S(\kk,C)_j
\longrightarrow
 \Tor_{n-d-1}^S(\kk,S/I_{\Delta'})_j
\longrightarrow \cdots
$$
for all $j \geq 0$.
Since $\Delta$ is a homology $(d-1)$-sphere,
$\Tor_{n-d}^S(\kk,S/I_{\Delta})\cong \kk(-\ee_{[n]})$.
On the other hand,
since $\gin(I_{\Delta'})$ has no generators of degrees $\geq r+1$
as we showed in the proof of Theorem \ref{thm:CM},
we have
$\Tor_{n-d-1}^S(\kk,S/I_{\Delta'})_j=0$ for $j \geq n-d-1+r$
by Lemma \ref{reg}.
These facts and the exact sequence imply
$$\bigoplus_{j \geq n-d-1+r} \Tor_{n-d-1}^S(\kk,C)_j\cong \kk(-\ee_{[n]}).$$
On the other hand, since $I_{\Delta'}=(I_{\Delta})_{\leq r}$, we have $C_k = 0$ for $k \leq r$.
This implies $\Tor_{n-d-1}^S(\kk,C)_j=0$ for $j < n-d-1+r$ by Lemma \ref{koszul}, and (iii) follows.

\textit{Step 2:}
We show that $C \cong \Ext_S^{n-d-1}(\kk[\Delta'],S(-\ee_{[n]}))=\omega_{\kk[\Delta']}$.
It is standard in commutative algebra that conditions (i), (ii) and (iii) imply this isomorphism, but we include its proof.
Since $C$ is Cohen--Macaulay of dimension $d+1$,
it follows from \cite[Theorem 3.3.10]{Bruns-Herzog}
that
\begin{align}
\label{2-1}
\Ext_S^{n-d-1}\big(\Ext_S^{n-d-1}\big(C,S(-\ee_{[n]})\big),S(-\ee_{[n]})\big)=C.
\end{align}
On the other hand, by the duality on resolutions of $C$ and $\Ext_S^{n-d-1}(C,S(-\ee_{[n]}))$,
we have
$$\Tor_0^{S}(\kk,\Ext_S^{n-d-1}(C,S(-\ee_{[n]})))_{\aaa} \cong \Tor_{n-d-1}^S(\kk,C)_{\ee_{[n]}-\aaa}$$
for all $\aaa \in \ZZ^n$
(see \cite[Corollary 3.3.9]{Bruns-Herzog}).
Then the condition (iii) of Step 1 implies that $\Ext_S^{n-d-1}(C,S(-\ee_{[n]}))$ has a single generator in degree $0$,
so $\Ext_S^{n-d-1}(C,S(-\ee_{[n]}))\cong S/J$ for some ideal $J$.

We claim that $J=I_{\Delta'}$, equivalently $\mathrm{ann}_S(\Ext_S^{n-d-1}(C,S(-\ee_{[n]})))=I_{\Delta'}$.
Since $\mathrm{ann}_S(M) \subset \mathrm{ann}_S(\Hom_S(M,N))$ for all $S$-modules $M$ and $N$,
\eqref{2-1} says
$$
\mathrm{ann}_S(C) \subset \mathrm{ann}_S(\Ext_S^{n-d-1}(C,S(-\ee_{[n]})))
\subset \mathrm{ann}_S(C),
$$
which implies $\mathrm{ann}_S(\Ext_S^{n-d-1}(C,S(-\ee_{[n]})))=\mathrm{ann}_S(C)=I_{\Delta'}$ by (i) of Step 1.
Now $C \cong \Ext_S^{n-d-1}(\kk[\Delta'],S(-\ee_{[n]}))$ follows from \eqref{2-1}
since $\Ext_S^{n-d-1}(C,S(-\ee_{[n]})) \cong S/I_{\Delta'}=\kk[\Delta']$.

\textit{Step 3:}
We now prove the theorem.
By the Hochster's formula \eqref{3},
for any $F \in \Delta'$
we have
\begin{align*}
\dim_\kk \widetilde H_{d -\#F} \big(\mathrm{lk}_{\Delta'}(F)\big)=
\dim_\kk (\omega_{\kk[\Delta']})_{\ee_F}=
\dim_\kk (I_{\Delta}/I_{\Delta'})_{\ee_F}
=
\begin{cases}
1, & \mbox{ if } F \not \in \Delta, \\
0, & \mbox{ otherwise.}
\end{cases}
\end{align*}
Clearly the above equation
together with $\Delta'$ being Cohen--Macaulay
imply that $\Delta'$ is a homology ball whose boundary complex is equal to $\Delta$.
\end{proof}

The proof given in this section is quite algebraic.
It would be of interest to have a combinatorial or a topological proof of Theorem \ref{thm:GLBCLefschetzSpheres}.
%%%%%%%%%%%%%%%%%%%%%%%%%%%%%%%%%%%%%%%%%5
%%%%%%%%%%%%%%%%%%%%%%%%%%%%%%%%%%%%%%%%%%%%%

\section{Concluding Remarks}\label{sec:conclude}
It is easy to see that ($1$-)stacked spheres are boundaries of stacked polytopes,
and that their stacked triangulations are shellable.
Then it is natural to ask %that the following condition holds.
\begin{question}
\label{6-1}
Let $\Delta$ be an $(r-1)$-stacked $d$-ball with $r \leq \frac{d+1}{2}$.
Then
\begin{itemize}
\item[(i)] is it true that $\partial \Delta$ is polytopal?
\item[(ii)] is it true that $\Delta$ is shellable?
\end{itemize}
\end{question}

The next examples show that the answers to the above questions
are negative.

\begin{example}
\label{e1}
Let $B$ be Rudin's non-shellable triangulation of a $3$-ball \cite{Rudin}. Its $f$-vector is $(1,14,66,94,41)$ and its $h$-vector is $(1,10,30,0,0)$.
Let $K$ be the join of $B$ and a simplex $\sigma$ of dimension $k \geq 2$.
Then $K$ is a $(k+4)$-ball.
Also, the interior faces of $K$ are exactly those containing both $\sigma$ and an interior face of $B$.
Then, since $B$ contains no interior vertices,
$K$ is $2$-stacked.

On the other hand, $K$ is not shellable since $B$ is not shellable. Indeed, a shelling order on $K$ would induce a shelling order on $B$ by deleting $\sigma$ from all facets in the shelling order of $K$.

Also, $\partial K$ is non-polytopal. Indeed, assume the contrary, then for $v$ a vertex of $\sigma$, $\lk_{\partial K}(\sigma -\{v\})=B \cup (\{v\} * \partial B)$  is the boundary complex of a polytope. Thus, there is a Bruggesser--Mani \emph{line shelling} of $\lk_{\partial K}(\sigma -\{v\})$ which adds the facets with $v$ last (see \cite[Section 8.2]{Ziegler} for details),
so first it shells $B$, a contradiction.
\end{example}

\begin{example}
\label{e2}
%Next, we note that
There exists a large number of shellable $(r-1)$-stacked $d$-balls with $r \leq \frac d 2$
whose boundary is non-polytopal. Indeed, fixing $d$,
Goodman and Pollack \cite{GoodmanPollack:fewPolytopes-86} showed that the log of the number of combinatorial types of boundaries of simplicial $d$-polytopes on $n$ vertices is at most $O(n \log(n))$.
On the other hand,
the log of the number of Kalai's squeezed $(d-1)$-spheres satisfying $h_{r-1}=h_r$, where $r \leq \frac d 2$,
is at least $\Omega(n^{r-2})$
(see \cite{Kalai-manyspheres} for the details).
Since Kalai's squeezed spheres satisfying $h_{r-1}=h_r$ are known to be the boundaries of $(r-1)$-stacked shellable balls (see \cite{Kalai-manyspheres} and \cite{Kleinschmidt-Lee84} for details),
they give a large number of $(r-1)$-stacked triangulations of a $d$-ball whose boundary is non-polytopal when $r \geq 4$.
\end{example}

Although the answers to Question \ref{6-1} are negative in general,
it would be of interest to study these problems for special cases.
Below, we write a few open questions on stacked balls and spheres.

\begin{conjecture}\label{Q:shellablePolytopes}
Let $P$ be an $(r-1)$-stacked $d$-polytope with $r \leq \frac{d+1}{2}$.
\begin{itemize}
\item[(i)] (McMullen \cite{McMullen:Triangulations})
The $(r-1)$-stacked triangulation of $P$ is {\em regular}.
\item[(ii)] (Bagchi--Datta \cite{Bagchi-Datta:StellatedSpheres})
The $(r-1)$-stacked triangulation of $P$ is shellable.
\end{itemize}
\end{conjecture}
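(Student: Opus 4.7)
The plan is to reduce (ii) to (i) by a standard Bruggesser--Mani sweep argument, so the substantive work is on (i). If $\omega : V(P) \to \RR$ is a convex lifting whose lower envelope projects to the $(r-1)$-stacked triangulation $K$ of $P$, then a generic affine hyperplane in $\RR^{d+1}$ swept upward from below the lifted polytope $\tilde P_\omega$ meets the lower facets of $\tilde P_\omega$ in an order whose projection is a shelling of $K$. This packages (ii) cleanly once (i) is established, so the remainder of the proposal concerns (i).

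For (i), I would construct an explicit convex piecewise-linear function $\phi : P \to \RR$ whose maximal domains of linearity are precisely the facets of $K = \Delta(d-r)$, by induction on $r$. The base case $r = 1$ is immediate because a $0$-stacked triangulation (no interior faces of dimension $\leq d-1$) forces $P$ to be a $d$-simplex. The case $r = 2$ is the classical stacked polytope case, where $K$ is built by iteratively attaching $d$-simplices across boundary $(d-1)$-faces, and heights assigned in the reverse order of attachment are visibly convex. For the inductive step $r \geq 3$, the idea is to locate an interior face $F$ of $K$ of minimal dimension (which must equal $d-r+1$ by $(r-1)$-stackedness), excise the star of $F$ to form a smaller polytope $P'$ whose induced triangulation is $(r-2)$-stacked, apply the inductive hypothesis to lift $P'$ convexly, and extend the lift across the excised star by choosing heights on the vertices of $F$ that maintain local convexity along each $(d-1)$-face adjacent to $F$. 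The Cohen--Macaulay property of $\Delta(d-r)$ established in Corollary \ref{cor:CM}, together with purity, will be used to ensure that such excisions are well-defined and that the local link structure near $F$ supports a compatible gluing.

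The main obstacle will be the gluing step: ensuring that the locally convex choice of heights on the excised star matches the convex structure inherited from $P'$ along all shared dihedral angles. A plausible alternative, which may in fact be cleaner, is to bypass the induction and extract the heights directly from algebraic data. The canonical module $\omega_{\kk[\Delta(d-r)]}$, identified in the proof of Theorem \ref{thm:GLBCLefschetzSpheres} as $I_\Delta / I_{\Delta(d-r)}$, carries a natural $\ZZ^n$-grading whose degrees are indexed by faces of $\Delta$; a suitable Lefschetz element for $\kk[\Delta]$, lifted to a linear form in $S$ and evaluated on vertex coordinates, may produce the height function. The crux in either approach is translating the algebraic Lefschetz/canonical-module data into honest geometric convexity of dihedral angles, and I expect this translation — rather than either the induction or the algebra separately — to be where a full proof of Conjecture \ref{Q:shellablePolytopes} must confront genuinely new input.
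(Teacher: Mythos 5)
This statement is a \emph{conjecture} in the paper, not a theorem: it appears in the concluding remarks, with parts (i) and (ii) attributed respectively to McMullen and to Bagchi--Datta as open problems. The paper offers no proof, so there is no argument to compare against, and any proposal claiming to settle it should be read with that in mind.

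Your reduction of (ii) to (i) is correct and is exactly what the paper records (``Conjecture~\ref{Q:shellablePolytopes}(i) implies Conjecture~\ref{Q:shellablePolytopes}(ii)''): a regular subdivision of a polytope is shellable by sweeping a generic hyperplane past the lower hull of the lifted polytope. The base case $r=1$ is fine. The $r=2$ case is essentially a placing/pushing triangulation of a stacked polytope and is indeed regular, though ``visibly convex'' is doing real work there.

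The inductive step, however, has a genuine gap. ``Excise the star of $F$ to form a smaller polytope $P'$'' is not a well-defined operation. For $F$ an interior face of $K$ of dimension $d-r+1$, the closed star $\overline{\mathrm{st}}_K(F)=F*\lk_K(F)$ is itself a $d$-ball ($\lk_K(F)$ is an $(r-2)$-sphere since $F$ is interior), and the complement of its interior in $K$ is not a $d$-ball. More importantly, the underlying point set after excision is not a convex polytope, so there is no $P'$ to which the inductive hypothesis can be applied, and no sense in which the leftover triangulation is an $(r-2)$-stacked triangulation of some polytope. The proposal never explains how convexity of the ambient region is recovered after removing the star, which is precisely what the induction needs. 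A smaller issue: the minimal dimension of an interior face of a $(r-1)$-stacked ball need not equal $d-r+1$; it could be larger if $K$ is in fact $(r'-1)$-stacked for some $r'<r$. Finally, the algebraic alternative is too vague to assess: no mechanism is given by which the $\ZZ^n$-graded module $\omega_{\kk[\Delta(d-r)]}=I_\Delta/I_{\Delta(d-r)}$ or a Lefschetz element would yield a single real height per vertex, much less one certifying convexity of every internal dihedral angle. You correctly flag the convexity-certification step as where ``genuinely new input'' is needed, but as written the inductive excision does not make geometric sense, and the proposal does not establish the conjecture.
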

Note that Conjecture \ref{Q:shellablePolytopes}(i)
implies Conjecture \ref{Q:shellablePolytopes}(ii).
McMullen's original conjecture considered the case $r \leq \frac d 2$,
but we want to include the case $r= \frac {d+1} 2$ in view of Theorem \ref{thm:Uniqueness}.
Also, it would be of interest to study the geometric meaning of the triangulation given in Theorem \ref{thmIntro:GLBCpolytopes}.

We see that there exists a non-shellable $2$-stacked ball whose boundary is non-polytopal in Example \ref{e1},
and that there even exists a shellable $3$-stacked balls whose boundary is non-polytopal in Example \ref{e2}.
But the following question is open.

\begin{question}\label{Q:polytopal}
Let $\Delta$ be a $2$-stacked triangulation of a $d$-ball which is shellable.
Is $\partial \Delta$ polytopal?
\end{question}

Finally, we raise the following question concerning Theorem \ref{thmIntro:GLBCspheres}.

\begin{question}
With the same notation as in Theorem \ref{thmIntro:GLBCspheres},
is it true that if $\Delta$ is a triangulation of a sphere then $\Delta(d-r)$ is a triangulation of a ball?
\end{question}

It seems to be plausible that if $\Delta$ is a PL-sphere then $\Delta(d-r)$ is a PL-ball.
But we do not have an answer even for this case.
\section*{Acknowledgments}
We would like to thank Gil Kalai and Isabella Novik for helpful comments on an earlier version of this paper.

\bibliography{gbiblio(modified)}

\begin{thebibliography}{10}

\bibitem{Bagchi-Datta:StellatedSpheres}
B.~Bagchi and B.~Datta.
\newblock On stellated spheres, shellable balls, lower bounds and a
  combinatorial criterion for tightness.
\newblock preprint, arXiv:1102.0856.

\bibitem{Barnette-LBTfacets-71}
D.~W. Barnette.
\newblock The minimum number of vertices of a simple polytope.
\newblock {\em Israel J. Math.}, 10:121--125, 1971.

\bibitem{Barnette:LBT-73}
D.~W. Barnette.
\newblock {A proof of the lower bound conjecture for convex polytopes.}
\newblock {\em Pac. J. Math.}, 46:349--354, 1973.

\bibitem{Billera-Lee}
L.~J. Billera and C.~W. Lee.
\newblock A proof of the sufficiency of {M}c{M}ullen's conditions for
  {$f$}-vectors of simplicial convex polytopes.
\newblock {\em J. Combin. Theory Ser. A}, 31(3):237--255, 1981.

\bibitem{Bruns-Herzog}
W.~Bruns and J.~Herzog.
\newblock {\em Cohen-{M}acaulay rings}, volume~39 of {\em Cambridge Studies in
  Advanced Mathematics}.
\newblock Cambridge University Press, Cambridge, 1993.

\bibitem{Dancis}
J.~Dancis.
\newblock Triangulated {$n$}-manifolds are determined by their
  {$[n/2]+1$}-skeletons.
\newblock {\em Topology Appl.}, 18(1):17--26, 1984.

\bibitem{Flores}
A.~Flores.
\newblock \"{U}ber $n$-dimensionale komplexe die im $\mathbb{R}^{2n+1}$ absolut
  selbstverschlungen sind.
\newblock {\em Ergeb. Math. Kolloq.}, 6:4--7, 1933/4.

\bibitem{Fulton}
W.~Fulton.
\newblock {\em Introduction to toric varieties}, volume 131 of {\em Annals of
  Mathematics Studies}.
\newblock Princeton University Press, Princeton, 1993.

\bibitem{GoodmanPollack:fewPolytopes-86}
J.~E. Goodman and R.~Pollack.
\newblock Upper bounds for configurations and polytopes in {${\bf R}\sp d$}.
\newblock {\em Discrete Comput. Geom.}, 1(3):219--227, 1986.

\bibitem{Gr}
M.~L. Green.
\newblock Generic initial ideals.
\newblock In {\em Six lectures on commutative algebra ({B}ellaterra, 1996)},
  volume 166 of {\em Progr. Math.}, pages 119--186. Birkh\"auser, Basel, 1998.

\bibitem{HibiHerzog}
J.~Herzog and T.~Hibi.
\newblock {\em Monomial Ideals}, volume 260 of {\em Graduate Texts in
  Mathematics}.
\newblock Springer-Verlag, London, 2011.

\bibitem{Kalai-LBT}
G.~Kalai.
\newblock Rigidity and the lower bound theorem. {I}.
\newblock {\em Invent. Math.}, 88(1):125--151, 1987.

\bibitem{Kalai-manyspheres}
G.~Kalai.
\newblock Many triangulated spheres.
\newblock {\em Discrete Comput. Geom.}, 3(1):1--14, 1988.

\bibitem{Kalai:Aspects-94}
G.~Kalai.
\newblock Some aspects of the combinatorial theory of convex polytopes.
\newblock In {\em Polytopes: abstract, convex and computational (Scarborough,
  ON, 1993)}, volume 440 of {\em NATO Adv. Sci. Inst. Ser. C Math. Phys. Sci.},
  pages 205--229. Kluwer Acad. Publ., Dordrecht, 1994.

\bibitem{VanKampen}
E.~R.~van Kampen.
\newblock Komplexe in euklidischen r\"{a}umen.
\newblock {\em Abh. Math. Sem.}, 9:72--78, 1932.

\bibitem{Kleinschmidt-Lee84}
P.~Kleinschmidt and C.~W. Lee.
\newblock On {$k$}-stacked polytopes.
\newblock {\em Discrete Math.}, 48(1):125--127, 1984.

\bibitem{McMullen-g-conj}
P.~McMullen.
\newblock The numbers of faces of simplicial polytopes.
\newblock {\em Israel J. Math.}, 9:559--570, 1971.

\bibitem{McMullen:Triangulations}
P.~McMullen.
\newblock Triangulations of simplicial polytopes.
\newblock {\em Beitr\"age Algebra Geom.}, 45(1):37--46, 2004.

\bibitem{McMullenWalkup:GLBC-71}
P.~McMullen and D.~W. Walkup.
\newblock {A generalized lower-bound conjecture for simplicial polytopes.}
\newblock {\em Mathematika, Lond.}, 18:264--273, 1971.

\bibitem{MillerSturmfels}
E.~Miller and B.~Sturmfels.
\newblock {\em Combinatorial commutative algebra}, volume 227 of {\em Graduate
  Texts in Mathematics}.
\newblock Springer-Verlag, New York, 2005.

\bibitem{Munkres}
J.~R. Munkres.
\newblock {\em Elements of algebraic topology}.
\newblock Addison-Wesley Publishing Company, Menlo Park, CA, 1984.

\bibitem{Nagel:Empty}
U.~Nagel.
\newblock Empty simplices of polytopes and graded {B}etti numbers.
\newblock {\em Discrete Comput. Geom.}, 39(1-3):389--410, 2008.

\bibitem{Reisner}
G.~A. Reisner.
\newblock Cohen-{M}acaulay quotients of polynomial rings.
\newblock {\em Advances in Math.}, 21(1):30--49, 1976.

\bibitem{Rudin}
M.~E. Rudin.
\newblock An unshellable triangulation of a tetrahedron.
\newblock {\em Bull. Amer. Math. Soc.}, 64:90--91, 1958.

\bibitem{Shapiro}
A.~Shapiro.
\newblock Obstructions to the imbedding of a complex in a euclidean space. {I}.
  {T}he first obstruction.
\newblock {\em Ann. of Math. (2)}, 66:256--269, 1957.

\bibitem{Stanley:CMcomplexes77}
R.~P. Stanley.
\newblock Cohen-{M}acaulay complexes.
\newblock In {\em Higher combinatorics ({P}roc. {NATO} {A}dvanced {S}tudy
  {I}nst., {B}erlin, 1976)}, pages 51--62. NATO Adv. Study Inst. Ser., Ser. C:
  Math. and Phys. Sci., 31. Reidel, Dordrecht, 1977.

\bibitem{Stanley:NumberFacesSimplicialPolytope-80}
R.~P. Stanley.
\newblock The number of faces of a simplicial convex polytope.
\newblock {\em Advances in Math.}, 35(3):236--238, 1980.

\bibitem{Swartz-SpheresToManifolds}
E.~Swartz.
\newblock Face enumeration---from spheres to manifolds.
\newblock {\em J. Eur. Math. Soc. (JEMS)}, 11(3):449--485, 2009.

\bibitem{Wu}
W.~Wu.
\newblock {\em A theory of imbedding, immersion, and isotopy of polytopes in a
  euclidean space}.
\newblock Science Press, Peking, 1965.

\bibitem{Ziegler}
G.~M. Ziegler.
\newblock {\em Lectures on polytopes}, volume 152 of {\em Graduate Texts in
  Mathematics}.
\newblock Springer-Verlag, New York, 1995.

\end{thebibliography}
\bibliographystyle{plain}

\end{document}